\def\version{10/06/2009 version 4 --- To appear in \emph{Alpine perspectives on algebraic topology}}
\theoremstyle{plain}
\newtheorem{thm}{Theorem}[section]
\newtheorem{lem}[thm]{Lemma}
\newtheorem{prop}[thm]{Proposition}
\newtheorem{cor}[thm]{Corollary}
\theoremstyle{definition}
\newtheorem{rem}[thm]{Remark}
\newtheorem{defn}[thm]{Definition}
\newtheorem{examp}[thm]{Example}
\numberwithin{equation}{section}
\def\ie{\emph{i.e.}}
\def\ds{\displaystyle}
\def\:{\colon}
\def\.{\cdot}
\def\<{\left\langle}
\def\>{\right\rangle}
\def\({\left(}
\def\){\right)}
\def\ph#1{\phantom{#1}}
\def\epsilon{\varepsilon}
\def\leq{\leqslant}
\def\geq{\geqslant}
\def\lra{\longrightarrow}
\def\Lra{\Longrightarrow}
\def\ra{\rightarrow}
\def\hat#1{\widehat{#1}}
\def\tilde#1{\widetilde{#1}}
\def\iso{\cong}
\DeclareMathOperator{\coker}{coker}
\DeclareMathOperator{\im}{im}
\def\F{\mathbb{F}}
\def\k{\Bbbk}
\def\Q{\mathbb{Q}}
\def\N{\mathbb{N}}
\def\Z{\mathbb{Z}}
\def\hZ{\hat{\mathbb{Z}}}
\def\widebar#1{\overline{#1}}
\def\ideal{\triangleleft}
\DeclareMathOperator{\End}{End}
\DeclareMathOperator*{\Gal}{Gal}
\DeclareMathOperator{\Hom}{Hom}
\DeclareMathOperator{\Tor}{Tor}
\def\id{\mathrm{id}}
\def\Id{\mathrm{Id}}
\DeclareMathOperator*{\colim}{colim}
\def\nr{\mathrm{nr}}
\def\Fc{\bar{\F}}
\DeclareMathOperator{\Char}{char}
\def\phi{\varphi}
\def\hotimes{{\hat\otimes}}
\DeclareMathOperator{\Map}{Map}
\def\Mapc{\Map^{\mathrm{c}}}
\def\nr{\mathrm{nr}}
\def\Enr{E^\nr}
\def\Gnr{\mathbb{G}^\nr}
\begin{document}
\title[$L$-complete Hopf algebroids]
     {$L$-complete Hopf algebroids and their comodules}
\author{Andrew Baker}

\address{
Department of Mathematics, University of Glasgow,
Glasgow G12 8QW, Scotland.}
\email{a.baker@maths.gla.ac.uk}
\urladdr{http://www.maths.gla.ac.uk/$\sim$ajb}
\subjclass[2000]{Primary 55N22; Secondary 55T25, 55P60, 16W30, 13K05}
\keywords{$L$-complete module, Hopf algebroid, Hopf algebra,
Lubin-Tate spectrum, Morava $K$-theory}
\thanks{The author was partially funded by a YFF Norwegian
Research Council grant whilst a visiting Professor at the
University of Oslo, and by an EPSRC Research Grant EP/E023495/1.
I would like to thank the Oslo topologists for their support
and interest in the early stages of this work, Mark Hovey,
Uli Kr\"ahmer and Geoffrey Powell for helpful conversations,
and finally the referee for perceptive comments and suggesting
some significant improvements in our exposition.
\\[7pt]
This paper is dedicated to the mountains of the Arolla valley}
\date{\version\hfill \texttt{arxiv.org:0901.1471}}
\begin{abstract}
We investigate Hopf algebroids in the category of $L$-complete
modules over a commutative Noetherian regular complete local
ring. The main examples of interest in algebraic topology are
the Hopf algebroids associated to Lubin-Tate spectra in the
$K(n)$-local stable homotopy category, and we show that these
have Landweber filtrations for all finitely generated discrete
modules.

Along the way we investigate the canonical Hopf algebras
associated to Hopf algebroids over fields and introduce a
notion of unipotent Hopf algebroid generalising that for
Hopf algebras.
\end{abstract}

\maketitle

\section*{Introduction}

In this paper we describe some algebraic machinery that has
been found useful occurs when working with the $K(n)$-local
homotopy category, and specifically the cooperation structure
on covariant functors of the form $E^\vee_*(-)$, where
\[
E^\vee_*(X) = \pi_*(L_{K(n)}(E\wedge X))
\]
is the homotopy of the Bousfield localisation of $E\wedge X$
with respect to Morava $K$-theory $K(n)$. Our main focus is
on algebra, but our principal examples originate in stable
homotopy theory.

In studying the $K(n)$-local homotopy category, topologists
have found it helpful to use the notion of \emph{$L$-complete
module} introduced for other purposes by Greenlees and May
in~\cite{JPCG-JPM}. It is particularly fortunate that the
Lubin-Tate spectrum $E_n$ associated with a prime $p$ and
$n\geq1$ has for its homotopy ring
\[
\pi_*E_n = W\F_{p^n}[[u_1,\ldots,u_{n-1}]][u,u^{-1}],
\]
where all generators are in degree $0$ except $u$ which has
degree~$2$. Thus (apart form the odd-even grading) the
coefficient ring for the covariant functor $(E_n)^\vee_*(-)$
is a commutative Noetherian regular complete local ring of
dimension~$n$ and the theory of $L$-complete modules works
well. For details of these applications see~\cite{MH-NS:666},
also~\cite{Ho:colim} and~\cite[section~7]{AB&BR}.

In this paper we consider analogues of Hopf algebroids in
the category of $L$-complete modules over a commutative
Noetherian regular complete local ring, and relate this to
our earlier work of~\cite{AB-LFT}. Since the latter appeared
there has been a considerable amount of work by Hovey and
Strickland~\cite{MH-NS:Comod&LET,MH-NS:LocalBP*BP} on
localisations of categories of comodules over $MU_*MU$ and
$BP_*BP$, but that seems to be unrelated to the present theory.
One of our main motivations was to try to understand the precise
sense in which the $L$-complete theory differs from these other
theories and we intend to return to this in future work.

We introduce a notion of unipotent Hopf algebroid over a field
and then consider the relationship between modules over a Hopf
algebroid $(\boldsymbol{k},\Gamma)$ and over its associated Hopf
algebra $(\boldsymbol{k},\Gamma')$ and unicursal Hopf algebroid.
We show that if $(\boldsymbol{k},\Gamma')$ is unipotent then so
is $(\boldsymbol{k},\Gamma)$. As a consequence a large class of
Hopf algebroids over local rings have composition series for
finitely generated comodules which are discrete in the sense
that they are annihilated by some power of the maximal ideal.

We end by discussing the important case $E^\vee_*E$ for a
Lubin-Tate spectrum $E$. In particular we verify that finitely
generated comodules over this $L$-complete Hopf algebroid have
Landweber filtrations.

For completeness, in two appendices we continue the discussion
of the connections with twisted group rings begun in~\cite{AB-LFT},
and expand on a result of~\cite{Ho:colim} on the non-exactness
of coproducts of $L$-complete modules.

\section{$L$-complete modules}\label{sec:L-modules}

Let $(R,\mathfrak{m})$ be a commutative Noetherian regular local
ring, and let $n=\dim R$. We denote the category of (left)
$R$-modules by $\mathscr{M}=\mathscr{M}_R$. Undecorated tensor
products will be taken over $R$, \ie, $\otimes=\otimes_R$. We
will often write $\hat{R}$ for the $\mathfrak{m}$-adic completion
$R\sphat_{\mathfrak{m}}$, and $\hat{\mathfrak{m}}$ for
$\mathfrak{m}\sphat_{\mathfrak{m}}$.

The $\mathfrak{m}$-adic completion functor
\[
M\mapsto M\sphat_{\mathfrak{m}}
\]
on $\mathscr{M}$ is neither left nor right exact. Following~\cite{JPCG-JPM},
we consider its left derived functors $L_s=L_s^{\mathfrak{m}}$
($s\geq0$). We recall that there are natural transformations
\[
\Id \xrightarrow{\ \eta\ } L_0 \lra (-)\sphat_{\mathfrak{m}}
                                  \lra R/\mathfrak{m}\otimes_R(-).
\]
The two right hand natural transformations are epimorphic for each
module, and $L_0$ is idempotent, \ie, $L_0^2\iso L_0$. It is also
true that $L_s$ is trivial for $s>n$. For computing the derived
functor for an $R$-module $M$ and $s>0$ there is a natural exact
sequence of~\cite[proposition~1.1]{JPCG-JPM}:
\begin{equation}\label{eqn:Ls-exactseq}
0\ra {\lim_k}^1\Tor^R_{s+1}(R/\mathfrak{m}^k,M) \lra L_sM
    \lra \lim_k\Tor^R_s(R/\mathfrak{m}^k,M) \ra 0.
\end{equation}

It is an important fact that tensoring with finitely generated
modules interacts well with the functor $L_0$. A module is said
to have \emph{bounded $\mathfrak{m}$-torsion module} if it is
annihilated by some power of $\mathfrak{m}$.
\begin{prop}\label{prop:fg-L0}
Let $M,N$ be $R$-modules with $M$ finitely generated.
Then there is a natural isomorphism
\[
M\otimes L_0N \lra L_0(M\otimes N).
\]
In particular,
\begin{align*}
L_0M &\iso M\sphat_{\mathfrak{m}} \iso \hat{R}\otimes M, \\
R/\mathfrak{m}^k\otimes L_0N
  &\iso R/\mathfrak{m}^k\otimes N = N/\mathfrak{m}^kN.
\end{align*}
Hence, if $N$ is a bounded $\mathfrak{m}$-torsion module
then it is $L$-complete.
\end{prop}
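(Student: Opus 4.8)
The plan is to establish the displayed natural isomorphism first, and then obtain the two special cases and the $L$-completeness of bounded torsion modules by specialising. Fix $N$ and regard $F(M)=M\otimes L_0N$ and $G(M)=L_0(M\otimes N)$ as covariant functors of $M$. A natural comparison map $F\ra G$ is supplied by the unit $\eta$: one has $M\otimes L_0N\ra L_0M\otimes L_0N\ra L_0(M\otimes N)$, where the first arrow is $\eta_M\otimes\id$ and the second is the lax monoidal structure map of $L_0$ (equivalently, one may simply build the transformation on free modules and extend). For $M=R$ both functors are canonically $L_0N$ and the map is the identity; since $L_0$ is additive, both sides carry finite direct sums to finite direct sums, so $F(R^b)\iso(L_0N)^b\iso G(R^b)$ naturally and the comparison map is an isomorphism on finite free modules.

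Both functors are right exact in $M$: this is clear for $F$, and for $G$ one composes the right exact functor $-\otimes N$ with $L_0$, which is right exact as the zeroth left derived functor of completion. Since $R$ is Noetherian, a finitely generated $M$ has a finite presentation $R^a\ra R^b\ra M\ra0$; applying $F$ and $G$ produces a commutative ladder with exact rows whose two leftmost vertical maps are the free-module isomorphisms above, so the five lemma forces the rightmost map $M\otimes L_0N\ra L_0(M\otimes N)$ to be an isomorphism as well. The one point requiring care here is that a matrix $R^a\ra R^b$ acts by the \emph{same} map $(L_0N)^a\ra(L_0N)^b$ through either functor; this holds because $L_0$ and $-\otimes L_0N$ are $R$-linear, and it is what makes the ladder commute.

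For the special cases I would first take $N=R$. The sequence~\eqref{eqn:Ls-exactseq} shows $L_sR=0$ for $s>0$ and $L_0R\iso\lim_kR/\mathfrak m^k=\hat{R}$, so the main isomorphism gives $L_0M\iso M\otimes\hat{R}$, which coincides with $M\sphat_{\mathfrak m}$ for finitely generated $M$ over a Noetherian ring. Taking $M=R/\mathfrak m^k$ gives $R/\mathfrak m^k\otimes L_0N\iso L_0(R/\mathfrak m^k\otimes N)$, and it remains to see that the $\mathfrak m^k$-torsion module $P=R/\mathfrak m^k\otimes N=N/\mathfrak m^kN$ is $L$-complete, \ie\ $L_0P\iso P$; granting this yields the second displayed isomorphism and, applied to a bounded torsion $N$ directly, the final assertion. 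The main obstacle is precisely this last claim, which is where the formal dévissage gives way to genuine content. I would prove it by noting that the given epimorphism $L_0P\ra P\sphat_{\mathfrak m}=P$ (the target being $P$ since $P$ is $\mathfrak m$-adically discrete and complete) has kernel the term ${\lim_j}^1\Tor^R_1(R/\mathfrak m^j,P)$ appearing in the $s=0$ case of the sequence of~\cite{JPCG-JPM}, and that this $\lim^1$ vanishes because the pro-system $\{\Tor^R_1(R/\mathfrak m^j,P)\}_j$ is pro-zero: using $\Tor^R_1(R/\mathfrak m^j,P)\iso\ker(\mathfrak m^j\otimes P\ra P)$ together with $\mathfrak m^{j+k}=\mathfrak m^k\mathfrak m^j$, every transition map that skips $k$ steps is zero, since $\mathfrak m^kP=0$. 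Hence $L_0P\iso P$, completing the proof.
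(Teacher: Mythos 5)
Your argument is correct. Note, however, that the paper does not actually prove this proposition: its entire ``proof'' is a citation of Hovey--Strickland \cite[proposition~A.4]{MH-NS:666}, so you have supplied the content that the paper outsources. What you wrote is essentially the standard d\'evissage that the cited reference uses: both $M\otimes L_0N$ and $L_0(M\otimes N)$ are right exact in $M$ (the latter because $L_0$, being a zeroth left derived functor, is right exact), they agree naturally on finite free modules, and a finite presentation of $M$ (available since $R$ is Noetherian) plus the five lemma finishes the comparison. Your construction of the comparison map via the unit and the lax monoidal structure of $L_0$ is fine, and you are right that naturality is exactly what makes the ladder commute.

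The one place where genuine content enters is the claim that a module $P$ with $\mathfrak{m}^kP=0$ is $L$-complete, and you handle it correctly: the $s=0$ case of the Greenlees--May sequence identifies $\ker(L_0P\to P\sphat_{\mathfrak{m}})$ with ${\lim_j}^1\Tor^R_1(R/\mathfrak{m}^j,P)$, and the pro-system $\{\Tor^R_1(R/\mathfrak{m}^j,P)\}_j\iso\{\ker(\mathfrak{m}^j\otimes P\to P)\}_j$ is pro-trivial because any element of $\mathfrak{m}^{j+k}\otimes P$ maps to $\sum b_i\otimes c_ix$ with $c_i\in\mathfrak{m}^k$, hence to zero in $\mathfrak{m}^j\otimes P$. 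Together with $P\sphat_{\mathfrak{m}}=P$ this gives $L_0P\iso P$, and the remaining displayed isomorphisms follow by the specialisations $N=R$ and $M=R/\mathfrak{m}^k$ exactly as you say. I have no corrections.
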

\begin{proof}
See \cite[proposition~A.4]{MH-NS:666}.
\end{proof}

A module $M$ is said to be \emph{$L$-complete} if
$\eta\:M\lra L_0M$ is an isomorphism. The subcategory of
$L$-complete modules $\hat{\mathscr{M}}\subseteq\mathscr{M}$
is a full subcategory and the functor
$L_0\:\mathscr{M}\lra\hat{\mathscr{M}}$ is left adjoint
to the inclusion $\hat{\mathscr{M}}\lra\mathscr{M}$. The
category $\hat{\mathscr{M}}$ has projectives, namely the
\emph{pro-free modules} which have the form
\[
L_0F = F\sphat_{\mathfrak{m}}
\]
for some free $R$-module $F$. Thus $\hat{\mathscr{M}}$ has enough
projectives and we can do homological algebra to define derived
functors of right exact functors.

By \cite[theorem~A.6(e)]{MH-NS:666}, the category $\hat{\mathscr{M}}$
is abelian and has limits and colimits
which are obtained by passing to $\mathscr{M}$, taking (co)limits
there and applying $L_0$. For the latter there are non-trivial
derived functors which by~\cite{Ho:colim} satisfy
\[
{\colim_{\hat{\mathscr{M}}}}^{s} = L_s\colim_{\mathscr{M}},
\]
so ${\colim}^{s}$ is trivial for $s>n$. In fact, for a coproduct
$\coprod_\alpha M_\alpha$ with $M_\alpha\in\hat{\mathscr{M}}$,
we also have
\[
L_n\biggl(\coprod_\alpha M_\alpha\biggr) = 0.
\]

The category $\hat{\mathscr{M}}$ has a symmetric monoidal
structure coming from the tensor product in $\mathscr{M}$.
For $M,N\in\mathscr{M}$,
let
\[
M\hotimes N=L_0(M\otimes N)\in\hat{\mathscr{M}}.
\]
Note that we also have
\[
M\hotimes N \iso L_0(L_0M\otimes L_0N).
\]
As in~\cite{MH-NS:666}, we find that $(\hat{\mathscr{M}},\hotimes)$
is a symmetric monoidal category.

For any $R$-module $M$, there are natural homomorphisms
\[
\hat R\otimes L_0M \lra L_0(R \otimes M) \lra L_0M,
\]
so we can view $\hat{\mathscr{M}}$ as a subcategory of
$\mathscr{M}_{\hat R}$; since $\hat R$ is a flat $R$-algebra,
for many purposes it is better to think of $\hat{\mathscr{M}}$
this way. For example, the functor $L_0=L^{\mathfrak{m}}_0$
on $\mathscr{M}$ can be expressed as
\[
L^{\mathfrak{m}}_0M \iso L^{\hat{\mathfrak{m}}}_0(\hat R\otimes M),
\]
where $L^{\hat{\mathfrak{m}}}_0$ is the derived functor on the
category of $\hat{R}$-modules $\mathscr{M}_{\hat R}$ associated
to completion with respect to the induced ideal
$\hat{\mathfrak{m}}\ideal\hat R$. Finitely generated modules
over $\hat R$ (which always lie in $\hat{\mathscr{M}}$) are
completions of finitely generated $R$-modules.

There is an analogue of Nakayama's Lemma provided
by~\cite[theorem~A.6(d)]{MH-NS:666}.
\begin{prop}\label{prop:Nakayama}
For $M\in\hat{\mathscr{M}}$,
\[
M=\mathfrak{m}M\quad\Lra\quad M=0.
\]
\end{prop}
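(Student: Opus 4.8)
The plan is to use the Greenlees--May machinery of Section~\ref{sec:L-modules} to rephrase the assertion as the vanishing of a derived inverse limit, to settle the principal case by a direct Mittag--Leffler computation, and to reduce the general case to it by induction on $n=\dim R$. First I would note that $M=\mathfrak{m}M$ forces $M=\mathfrak{m}^kM$ for every $k$, so that each quotient $M/\mathfrak{m}^kM$ vanishes and hence $\lim_k M/\mathfrak{m}^kM=0$. Since $M$ is $L$-complete we have $M\iso L_0M$, and the natural surjection $L_0M\ra M\sphat_{\mathfrak{m}}=\lim_k M/\mathfrak{m}^kM$ recorded above has kernel ${\lim_k}^1\Tor^R_1(R/\mathfrak{m}^k,M)$ (the case $s=0$ of~\eqref{eqn:Ls-exactseq}). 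As the right-hand term is $0$, this yields
\[
M\iso{\lim_k}^1\Tor^R_1(R/\mathfrak{m}^k,M),
\]
so everything reduces to showing that this $\lim^1$ term is trivial.

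The principal case $\mathfrak{m}=(t)$ is transparent, and I would dispose of it first. Here $t$ is a nonzerodivisor, so $0\to R\xrightarrow{t^k}R\to R/\mathfrak{m}^k\to0$ is a free resolution and $\Tor^R_1(R/\mathfrak{m}^k,M)=M[t^k]$, the submodule of $t^k$-torsion, with transition maps induced by multiplication by $t$. Because $M=tM$, each map $M[t^{k+1}]\xrightarrow{\,t\,}M[t^k]$ is onto: if $t^ky=0$ then $y=tm$ for some $m$, and $t^{k+1}m=t^ky=0$, so $y\in tM[t^{k+1}]$. An inverse system of epimorphisms has vanishing $\lim^1$, whence $M=0$.

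For general $n$ I would induct, using a regular system of parameters $t_1,\dots,t_n$ generating $\mathfrak{m}$. Passing to $\bar R=R/(t_n)$, a regular local ring of dimension $n-1$ with maximal ideal $\bar{\mathfrak{m}}$, one has $\bar{\mathfrak{m}}(M/t_nM)=(\mathfrak{m}M+t_nM)/t_nM=M/t_nM$ because $\mathfrak{m}M=M$. Granting that $L$-completeness descends along this quotient, the inductive hypothesis gives $M/t_nM=0$, that is $t_nM=M$; the principal-case argument applied to the single element $t_n$ then yields $M=0$.

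The hard part will be the two completeness inputs hidden in the inductive step: that $M/t_nM$ is again $L$-complete over $\bar R$, and that the $\mathfrak{m}$-adic $L$-completeness of $M$ supplies enough $(t_n)$-adic completeness to run the principal-case $\lim^1$ computation for $t_n$ alone. Equivalently, one must establish a Mittag--Leffler property for the non-principal systems $\{\Tor^R_1(R/\mathfrak{m}^k,M)\}_k$ that is invisible from~\eqref{eqn:Ls-exactseq} by itself, since for $n\geq2$ the transition maps need no longer be surjective. This is precisely where the regularity of $R$—which lets one work with a genuine regular sequence and the associated Koszul resolutions—and the full strength of the Greenlees--May machinery must enter.
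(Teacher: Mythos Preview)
The paper does not actually prove this proposition; it simply records it as \cite[theorem~A.6(d)]{MH-NS:666}. So there is nothing in the paper to compare your argument against, and any route you take is automatically different.

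Your plan is sound. The principal case is complete and correct (with the minor caveat that the paper states~\eqref{eqn:Ls-exactseq} only for $s>0$; the $s=0$ instance you invoke is equally in~\cite{JPCG-JPM}). The two gaps you flag in the inductive step are genuine, and both close once you import one further fact from the same Hovey--Strickland appendix: an $R$-module $M$ is $L$-complete with respect to $\mathfrak{m}=(t_1,\dots,t_n)$ if and only if $\Hom_R(R[1/t_j],M)=0=\Ext^1_R(R[1/t_j],M)$ for each~$j$, equivalently if and only if $M$ is $L$-complete with respect to every principal ideal $(t_j)$. With this in hand your second gap disappears immediately: $M$ is $(t_n)$-$L$-complete, and your principal-case $\lim^1$ computation goes through verbatim with $t=t_n$, since nothing in it used that the ambient ring was one-dimensional, only that the generator is a nonzerodivisor. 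For the first gap, $M/t_nM$ is $L$-complete over~$R$ because $\hat{\mathscr{M}}$ is abelian; since $R[1/t_j]$ is $R$-flat, the extension--restriction adjunction yields
\[
\Ext^i_{\bar R}(\bar R[1/\bar t_j],M/t_nM)\iso\Ext^i_R(R[1/t_j],M/t_nM)=0,
\]
so $M/t_nM$ is $L$-complete over~$\bar R$ and the induction runs.

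In short, your strategy works, but only after invoking a characterisation of $L$-completeness that is itself part of the theorem the paper is citing. Without it the Mittag--Leffler property you need for the tower $\{\Tor^R_1(R/\mathfrak{m}^k,M)\}_k$ when $n\geq2$ really is not visible from~\eqref{eqn:Ls-exactseq} alone, exactly as you acknowledge in your final paragraph.
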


This can be used to give proofs of analogues of many standard
results in the theory of finitely generated modules over
commutative rings. For example,
\begin{cor}\label{cor:Nakayama1}
Let $M\in\hat{\mathscr{M}}$ and suppose that $N\subseteq M$
is the image of a morphism $N'\lra M$ in $\hat{\mathscr{M}}$.
Then
\[
M=N+\mathfrak{m}M \quad\Lra\quad N=M.
\]
\end{cor}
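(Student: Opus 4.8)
The plan is to transcribe the classical derivation of this corollary from Nakayama's lemma, but to carry it out inside the abelian category $\hat{\mathscr{M}}$ rather than in $\mathscr{M}$: the naive quotient $M/N$ is not $L$-complete in general, so the object to which Nakayama must be applied is the cokernel of $N'\lra M$ computed in $\hat{\mathscr{M}}$. Accordingly, the first step is to form this cokernel $C$. Since (co)limits in $\hat{\mathscr{M}}$ are obtained by taking the corresponding (co)limit in $\mathscr{M}$ and then applying $L_0$, and since the ordinary cokernel of $N'\lra M$ is $M/N$ with $N=\im(N'\lra M)$, this identifies $C \iso L_0(M/N)$. Because $L_0$ is idempotent, $C$ is automatically $L$-complete, so it is a legitimate target for Proposition~\ref{prop:Nakayama}.

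Next I would translate the hypothesis and feed it in. Reducing $M=N+\mathfrak{m}M$ modulo $N$ shows that the ordinary quotient satisfies $M/N=\mathfrak{m}(M/N)$, equivalently $(M/N)/\mathfrak{m}(M/N)=0$. The key input is Proposition~\ref{prop:fg-L0}, applied with the finitely generated module $R/\mathfrak{m}$, which commutes reduction modulo $\mathfrak{m}$ past $L_0$:
\[
C/\mathfrak{m}C \iso R/\mathfrak{m}\otimes L_0(M/N) \iso (M/N)/\mathfrak{m}(M/N)=0,
\]
so that $C=\mathfrak{m}C$. Proposition~\ref{prop:Nakayama} then gives $C=0$. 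Thus $N'\lra M$ has vanishing cokernel in $\hat{\mathscr{M}}$, i.e.\ it is an epimorphism there, so its image $N$ is all of $M$.

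I expect the delicate point to be the very first step rather than any later computation. One must be careful that $N$, and the ``quotient'' whose vanishing we want, are understood through the abelian structure of $\hat{\mathscr{M}}$, where cokernels and images are computed by applying $L_0$ to the corresponding constructions in $\mathscr{M}$; working instead with the naive submodule $N\subseteq M$ and set-theoretic quotient $M/N$ in $\mathscr{M}$ would be fatal, since such a quotient need not be $L$-complete and Nakayama is unavailable for it. Once the cokernel is correctly identified as the $L$-complete module $L_0(M/N)$, the remainder is a routine transcription of the usual argument, with Proposition~\ref{prop:fg-L0} carrying the entire weight of relating the $L$-complete world back to the elementary identity $(M/N)/\mathfrak{m}(M/N)=0$.
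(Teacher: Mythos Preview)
Your argument is correct and follows the same outline as the paper's proof (form the quotient, observe it equals $\mathfrak{m}$ times itself, apply Proposition~\ref{prop:Nakayama}). However, you introduce an unnecessary detour based on a misconception, and this is worth pointing out because it obscures the very reason for the hypothesis on $N$.

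You assert that ``the naive quotient $M/N$ is not $L$-complete in general'' and therefore pass to $C=L_0(M/N)$, invoking Proposition~\ref{prop:fg-L0} to compute $C/\mathfrak{m}C$. In fact the ordinary quotient $M/N$ \emph{is} already $L$-complete here. The hypothesis that $N$ is the image of a morphism $N'\to M$ in $\hat{\mathscr{M}}$ is present precisely to guarantee this: since the inclusion $\hat{\mathscr{M}}\hookrightarrow\mathscr{M}$ is a right adjoint it preserves kernels, and a short argument with right exactness of $L_0$ shows that cokernels in $\mathscr{M}$ of maps between $L$-complete modules are again $L$-complete (apply $L_0$ to $A\to B\to\coker f\to 0$ and compare). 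Hence $N$ and then $M/N$ lie in $\hat{\mathscr{M}}$, so your $C$ is literally $M/N$ and the appeal to Proposition~\ref{prop:fg-L0} is superfluous.

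The paper's proof uses exactly this: it forms $M/N$ in $\hat{\mathscr{M}}$, reads off $M/N=\mathfrak{m}(M/N)$ directly from $M=N+\mathfrak{m}M$, and applies Proposition~\ref{prop:Nakayama}. Your version reaches the same conclusion, just with an extra (harmless) layer.
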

\begin{proof}
The standard argument works here since we can form $M/N$
in $\hat{\mathscr{M}}$ and as $M/N = \mathfrak{m}M/N$,
we have $M/N = 0$, whence $N=M$.
\end{proof}

At this point we remind the reader that over a commutative
local ring, every projective module is in fact free by a
result of Kaplansky~\cite[theorem~2.5]{Matsumura}. The proof
of our next result is similar to that of the better known but
weaker result for finitely generated projectives which is a
direct consequence of Nakayama's Lemma.
\begin{cor}\label{cor:Nakayama2}
Let $M\in\hat{\mathscr{M}}$ and suppose that $F$ is a free
module for which there is an isomorphism
$F/\mathfrak{m}F\iso M/\mathfrak{m}M$. Then there is an
epimorphism $L_0F\lra M$.
%
\end{cor}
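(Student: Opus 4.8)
The plan is to imitate the classical lifting argument for finitely generated projectives, but to replace ordinary Nakayama by its $L$-complete incarnation (Corollary~\ref{cor:Nakayama1}), which crucially does not require finite generation. Write $F=\bigoplus_\alpha Re_\alpha$ with basis $\{e_\alpha\}$, and let $\bar\phi\:F/\mathfrak{m}F\xrightarrow{\iso}M/\mathfrak{m}M$ be the given isomorphism. Since the reduction $M\lra M/\mathfrak{m}M$ is epimorphic, for each $\alpha$ I can choose $m_\alpha\in M$ lifting $\bar\phi(\bar e_\alpha)$, and then define an $R$-linear map $\phi\:F\lra M$ by $e_\alpha\mapsto m_\alpha$. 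Because $M$ is $L$-complete and $L_0$ is left adjoint to the inclusion $\hat{\mathscr{M}}\lra\mathscr{M}$, this $\phi$ factors uniquely through the unit $\eta\:F\lra L_0F$ as a morphism $\tilde\phi\:L_0F\lra M$ in $\hat{\mathscr{M}}$.

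Next I would reduce modulo $\mathfrak{m}$. Applying Proposition~\ref{prop:fg-L0} with the finitely generated module $R/\mathfrak{m}$ gives a natural isomorphism $R/\mathfrak{m}\otimes L_0F\iso R/\mathfrak{m}\otimes F=F/\mathfrak{m}F$, that is, $L_0F/\mathfrak{m}L_0F\iso F/\mathfrak{m}F$, induced by $\eta$. Since $\tilde\phi\o\eta=\phi$ and $\phi$ reduces to $\bar\phi$ on mod-$\mathfrak{m}$ quotients by construction, the map induced by $\tilde\phi$ on reductions is, under this identification, exactly $\bar\phi$. In particular the induced map $L_0F/\mathfrak{m}L_0F\lra M/\mathfrak{m}M$ is an isomorphism, hence surjective.

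Finally, set $N=\im\tilde\phi\subseteq M$, the image of a morphism in $\hat{\mathscr{M}}$ (here I use that $\hat{\mathscr{M}}$ is abelian, so this image makes sense). Surjectivity of the reduced map says precisely that $N+\mathfrak{m}M=M$, and Corollary~\ref{cor:Nakayama1} then yields $N=M$, so $\tilde\phi\:L_0F\lra M$ is the required epimorphism.

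The step I expect to be the main obstacle is producing a genuine morphism in $\hat{\mathscr{M}}$ to which Corollary~\ref{cor:Nakayama1} applies: because $F$ may have infinite rank, the finitely generated form of Nakayama is unavailable, and the naive lift $\phi\:F\lra M$ lives in $\mathscr{M}$ rather than $\hat{\mathscr{M}}$. The factorization through $L_0F$ supplied by the adjunction is therefore essential, and the identification $L_0F/\mathfrak{m}L_0F\iso F/\mathfrak{m}F$ via Proposition~\ref{prop:fg-L0} is what lets me transport the surjectivity of $\bar\phi$ into the hypothesis $N+\mathfrak{m}M=M$ needed to invoke the $L$-complete Nakayama argument.
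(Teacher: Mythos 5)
Your argument is correct and is essentially the proof the paper gives: lift the isomorphism to a map $F\lra M$, factor it through $L_0F\lra L_0M\iso M$ using $L$-completeness of $M$, identify $L_0F/\mathfrak{m}L_0F$ with $F/\mathfrak{m}F$ to see that the image $N$ satisfies $M=N+\mathfrak{m}M$, and conclude $N=M$ by the $L$-complete Nakayama lemma (Corollary~\ref{cor:Nakayama1}). The only difference is that you spell out the adjunction and the role of Proposition~\ref{prop:fg-L0} explicitly where the paper records the same information in a commutative diagram.
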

\begin{proof}
The isomorphism $F/\mathfrak{m}F\xrightarrow{\iso}M/\mathfrak{m}M$
lifts to a map $F\lra M$ that factors through
\[
L_0F\lra L_0M\iso M,
\]
which has image $N\subseteq M$ say. There is a commutative
diagram
\[
\xymatrix{
F\ar[r]\ar[d] & L_0F\ar[r]\ar[d] & M \ar[d] \\
F/\mathfrak{m}F\ar[r]^{\iso\ph{abc}}
   &L_0F/\mathfrak{m}L_0F\ar[r]^{\ph{abc}\iso}
   & M/\mathfrak{m}M
}
\]
which shows that $M = N+\mathfrak{m}M$, so $N=M$.
%
\end{proof}

Here is another example. Let $S\subseteq\mathfrak{m}$ and
let $M = SM$ be the submodule of $M$ consisting of all sums
of elements of the form $sz$ for $s\in $ and $z\in M$. We
say that an $R$-module $M$ is \emph{$S$-divisible} if for
every $x\in M$ and $s\in S$, there exists $y\in M$ such
that $x=sy$, \ie, $M = SM$. Since $R$ is an integral domain,
this is consistent with Lam's definition in chapter~1\S3C
of~\cite{Lam}, see also corollary~(3.17)$'$.
\begin{lem}\label{lem:NoInj}
Let $M\in\hat{\mathscr{M}}$ and let $S\subseteq\mathfrak{m}$
be non-empty. If $M$ is $S$-divisible then it is trivial. In
particular, injective objects in $\hat{\mathscr{M}}$ are trivial.
\end{lem}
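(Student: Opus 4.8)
The plan is to prove the divisibility statement first and then deduce the assertion about injectives from it. For the first part, I would choose an element $s\in S$, which exists since $S$ is non-empty. Because $M$ is $S$-divisible, every $x\in M$ can be written as $x=sy$ for some $y\in M$, so $M=sM$. Since $s\in\mathfrak{m}$ we have $sM\subseteq\mathfrak{m}M\subseteq M$, forcing $M=\mathfrak{m}M$, and Proposition~\ref{prop:Nakayama} then gives $M=0$.

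For the second part, suppose $M\in\hat{\mathscr{M}}$ is injective and fix a nonzero element $s\in\mathfrak{m}$. I would first check that multiplication by $s$,
\[
\cdot\, s\: \hat R\lra \hat R,
\]
is a monomorphism in $\hat{\mathscr{M}}$. As $\hat R$ is the completion of the regular local ring $R$ it is again a domain, so this map has trivial kernel in $\mathscr{M}$; since the kernel in $\hat{\mathscr{M}}$ is obtained by applying $L_0$ to the kernel in $\mathscr{M}$, it is likewise zero, and the map is a monomorphism in $\hat{\mathscr{M}}$.

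Now take an arbitrary $x\in M$. The $R$-linear map $R\lra M$ sending $1\mapsto x$ corresponds, under the adjunction between $L_0$ and the inclusion $\hat{\mathscr{M}}\lra\mathscr{M}$, to a morphism $f\:\hat R\lra M$ with $f(1)=x$. Applying the definition of an injective object to the monomorphism $\cdot\, s$ produces an extension $g\:\hat R\lra M$ with $g\circ(\cdot\, s)=f$. Evaluating at $1$ and using $R$-linearity of $g$ gives $s\,g(1)=g(s)=f(1)=x$, so $x=s\,g(1)$. Hence $M$ is $\{s\}$-divisible, and the first part applies to give $M=0$.

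I expect the only delicate point to be the verification that $\cdot\, s$ is genuinely a monomorphism in $\hat{\mathscr{M}}$ rather than merely in $\mathscr{M}$; everything else is a formal consequence of the adjunction and the universal property of injectives. Implicitly one needs $\mathfrak{m}\neq0$ to produce the nonzero $s$, which holds in the cases of interest where $\dim R\geq1$.
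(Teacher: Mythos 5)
Your proposal is correct and follows essentially the same route as the paper: $S$-divisibility gives $M=\mathfrak{m}M$ and hence $M=0$ by Proposition~\ref{prop:Nakayama}, and for the injectivity statement one extends the map $\hat R=L_0R\lra M$, $1\mapsto x$, along multiplication by $s$ to exhibit divisibility. The only difference is that you spell out why $\cdot\,s\:\hat R\lra\hat R$ is a monomorphism in $\hat{\mathscr{M}}$ (and note the implicit assumption $\mathfrak{m}\neq0$), a point the paper leaves tacit in its diagram.
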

\begin{proof}
For the first statement, if $M=SM$ then $M\subseteq\mathfrak{m}M$
and so $M=\mathfrak{m}M$, therefore $M=0$.

Let $M$ be injective in the category $\hat{\mathscr{M}}$. Then
for each $x\in M$ there is a homomorphism $R\lra M$ for which
$1\mapsto x$. This extends to a homomorphism $L_0R=\hat{R}\lra M$.
For $s\in S$, there is a homomorphism $L_0R\lra L_0R$ induced
from multiplication by~$s$. By injectivity there is an extension
to a diagram
\[
\xymatrix{
0\ar[r] & L_0R\ar[r]^{s}\ar[d] & L_0R\ar@{.>}[dl] \\
& M &
}
\]
so $M$ is $S$-divisible.
\end{proof}

We will find it useful to know about some basic functors
on $\hat{\mathscr{M}}$ and their derived functors.

Let $N$ be an $L$-complete $R$-module. As the functors
$N\otimes(-)\:\hat{\mathscr{M}}\lra\mathscr{M}$ and
$L_0\:\mathscr{M}\lra\hat{\mathscr{M}}$ are right exact,
so is the endofunctor of $\hat{\mathscr{M}}$
\[
M\mapsto N\hotimes M = L_0(N\otimes M).
\]
Therefore we can use resolutions by projective objects
(\ie, pro-free $L$-complete modules) to form the left
derived functors, which we will denote $\widehat{\Tor}^R_s(N,-)$,
where
\[
\widehat{\Tor}^R_0(N,M)=N\hotimes M.
\]
If $P$ is pro-free then by definition, $\widehat{\Tor}^R_s(N,P)=0$
for $s>0$. On the other hand, $\widehat{\Tor}^R_s(P,-)$ need
not be the trivial functor (see Appendix~\ref{sec:App2}). This
shows that $\widehat{\Tor}^R_s(-,-)$ is not a balanced bifunctor,
\ie, in general
\[
\widehat{\Tor}^R_s(N,M)\not\cong\widehat{\Tor}^R_s(M,N).
\]

By Proposition~\ref{prop:fg-L0}, for a finitely generated
$R$-module $N_0$, $L_0N_0$ is a finitely generated
$\hat{R}$-module which induces the left exact functor
\[
M\mapsto L_0(N_0\otimes M) \iso N_0\otimes M.
\]

For $M\in\hat{\mathscr{M}}$, we can choose consider a free
resolution in $\mathscr{M}$,
\[
F_*\lra M\ra 0.
\]
Recalling that $L_0M\iso M$ and $L_sM=0$ for $s>0$, the
homology of $L_0F_*$ is
\[
H_*(L_0F_*) = L_0M \iso M,
\]
hence we have a resolution of $M$ by pro-free modules
\[
L_0F_*\lra M\ra 0.
\]
Then
\[
\widehat{\Tor}^R_*(L_0N_0,M) = H_*(L_0(N_0\otimes F_*)).
\]
But now we have
\[
L_0(N_0\otimes F_*) \iso
N_0\otimes L_0F_* = N_0\otimes (F_*)\sphat_{\mathfrak{m}}.
\]

When $N$ is a finitely generated $\mathfrak{m}$-torsion
module, we have $L_0N=N$ and
\[
L_0(N\otimes F_*) \iso N\otimes F_*,
\]
therefore
\begin{equation}\label{eqn:hatTor-NM}
\widehat{\Tor}^R_*(N,M) = \Tor^R_*(N,M).
\end{equation}
Now take a free resolution
\[
P_* \lra N \ra0
\]
with each $P_s$ finitely generated. Then
\[
\widehat{\Tor}^R_*(M,N) = H_*(L_0(M\otimes P_*))
            \iso H_*(M\otimes P_*)) = \Tor^R_*(M,N),
\]
hence
\begin{equation}\label{eqn:hatTor-MN}
\widehat{\Tor}^R_*(M,N) \iso \Tor^R_*(M,N).
\end{equation}

Combining \eqref{eqn:hatTor-NM} and \eqref{eqn:hatTor-MN}, we
obtain the following restricted result on $\widehat{\Tor}^R_*$
as a balanced bi-functor. As far as we can determine, there is
no general analogue of this for arbitrary $L$-complete modules~$N$
which are finitely generated as $\hat{R}$-modules.
\begin{prop}\label{prop:hTor-Balanced}
Let $M,N$ be $L$-complete $R$-modules, where $N$ is a finitely
generated\/ $\mathfrak{m}$-torsion module. Then
\[
\widehat{\Tor}^R_*(M,N) \iso \Tor^R_*(M,N)
                    \iso \widehat{\Tor}^R_*(N,M).
\]
\end{prop}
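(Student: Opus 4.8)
The plan is to identify each of the two derived functors $\widehat{\Tor}^R_*(M,N)$ and $\widehat{\Tor}^R_*(N,M)$ with the ordinary $\Tor^R_*$ computed in $\mathscr{M}$, using in each case a resolution adapted to whichever variable carries a useful finiteness hypothesis, and then to close the loop with the classical fact that $\Tor^R_*$ is a balanced bifunctor on $\mathscr{M}$. The two identifications are exactly the computations \eqref{eqn:hatTor-NM} and \eqref{eqn:hatTor-MN} carried out above, so the real work is to assemble them in the correct order.

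First I would treat $\widehat{\Tor}^R_*(N,M)$. Choose a free resolution $F_*\lra M$ in $\mathscr{M}$; applying $L_0$ and recalling that $L_0M\iso M$ while $L_sM=0$ for $s>0$ (valid since $M$ is $L$-complete) produces a pro-free resolution $L_0F_*\lra M$ with which the derived functor may be computed. Applying $N\hotimes(-)$ then gives $H_*(L_0(N\otimes F_*))$. The key observation is that $N$ is annihilated by some power of $\mathfrak{m}$, hence so is each $N\otimes F_s$; such a bounded $\mathfrak{m}$-torsion module is $L$-complete by Proposition~\ref{prop:fg-L0}, so $L_0$ acts as the identity on it and $L_0(N\otimes F_*)\iso N\otimes F_*$. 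This yields $\widehat{\Tor}^R_*(N,M)\iso\Tor^R_*(N,M)$, which is \eqref{eqn:hatTor-NM}.

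Next I would treat $\widehat{\Tor}^R_*(M,N)$. Since $R$ is Noetherian and $N$ is finitely generated, I can choose a resolution $P_*\lra N$ by \emph{finitely generated} free modules, and applying $L_0$ again gives a pro-free resolution. Applying $M\hotimes(-)$ produces $H_*(L_0(M\otimes P_*))$, and now the relevant point is a different one: each $M\otimes P_s$ is a \emph{finite} direct sum of copies of the $L$-complete module $M$, hence is itself $L$-complete, so once more $L_0$ is transparent and $L_0(M\otimes P_*)\iso M\otimes P_*$. This gives $\widehat{\Tor}^R_*(M,N)\iso\Tor^R_*(M,N)$, which is \eqref{eqn:hatTor-MN}. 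Chaining \eqref{eqn:hatTor-MN}, the classical balancing isomorphism $\Tor^R_*(M,N)\iso\Tor^R_*(N,M)$, and \eqref{eqn:hatTor-NM} then proves the proposition.

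The main obstacle to watch is that $L_0$ need not commute with the tensor products appearing in these resolutions: coproducts of $L$-complete modules are in general not $L$-complete, as the non-triviality of the derived coproducts flagged above shows. The two computations are rescued by \emph{different} finiteness conditions --- the bounded $\mathfrak{m}$-torsion of $N$ keeps every direct sum of copies of $N$ inside $\hat{\mathscr{M}}$, whereas the finite generation of $N$ forces only finite direct sums of $M$ to appear --- and making sure that each argument invokes the right one is the crux. This asymmetry is precisely why, as noted above, no such balancing is available for a general $L$-complete $N$ that is merely finitely generated over $\hat{R}$.
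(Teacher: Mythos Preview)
Your proposal is correct and follows precisely the paper's approach: the proposition is stated as an immediate consequence of the two computations \eqref{eqn:hatTor-NM} and \eqref{eqn:hatTor-MN} carried out just before it, and you reproduce those computations faithfully, invoking bounded $\mathfrak{m}$-torsion for the first and finite generation of the resolution for the second. Your closing paragraph on the asymmetric roles of the two finiteness hypotheses is a helpful gloss that the paper leaves implicit.
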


When $N$ is a finitely generated $\mathfrak{m}$-torsion module,
we may also consider the composite functor
$\mathscr{M}\lra\hat{\mathscr{M}}$ for which
\[
M\mapsto L_0(N\otimes M).
\]
Since
\[
L_0(N\otimes M) = N\otimes L_0M = N\otimes M,
\]
this functor has for its left derived functors $\Tor^R_*(N,-)$
and there is an associated composite functor spectral sequence.
\begin{prop}\label{prop:Tor-L-SS}
Let $N$ be a finitely generated $\mathfrak{m}$-torsion module.
Then for each $R$-module $M$, there is a natural first quadrant
spectral sequence
\[
\mathrm{E}^2_{s,t} = \widehat{\Tor}^R_s(N,L_tM)
                   = \Tor^R_s(N,L_tM)
\quad\Lra\quad
\Tor^R_{s+t}(N,M).
\]
\end{prop}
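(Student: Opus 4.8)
The plan is to obtain this as the Grothendieck composite-functor spectral sequence for a pair of right exact functors. I would take $F = L_0\colon \mathscr{M} \to \hat{\mathscr{M}}$, whose left derived functors are the $L_t$ (since $F$ agrees with $\mathfrak{m}$-adic completion on free modules, these are computed by free resolutions exactly as in the definition of the $L_s$), and $G = N \hotimes (-)\colon \hat{\mathscr{M}} \to \hat{\mathscr{M}}$, whose left derived functors are $\widehat{\Tor}^R_s(N,-)$ by definition. The composite is $G\circ F\colon M \mapsto N \hotimes L_0 M = L_0(N \otimes L_0 M)$, and the general machinery will then produce a first quadrant homological spectral sequence
\[
\mathrm{E}^2_{s,t} = (L_s G)(L_t F)(M) = \widehat{\Tor}^R_s(N, L_t M) \;\Lra\; L_{s+t}(G\circ F)(M).
\]

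To invoke this machinery I would check its two hypotheses and then identify the abutment. First, both $F$ and $G$ are right exact (the former as a left adjoint, the latter as noted in the text preceding \eqref{eqn:hatTor-NM}), and both $\mathscr{M}$ and $\hat{\mathscr{M}}$ have enough projectives. The crucial hypothesis is that $F$ carries projectives to $G$-acyclic objects: a free $R$-module $\Phi$ is sent to $L_0 \Phi$, which is pro-free and hence projective in $\hat{\mathscr{M}}$, so $\widehat{\Tor}^R_s(N, L_0\Phi) = 0$ for $s > 0$, exactly as required. To identify the abutment I must show that $G\circ F$ agrees with $N\otimes(-)$. Since $N$ is annihilated by some $\mathfrak{m}^k$, Proposition~\ref{prop:fg-L0} gives $N \otimes L_0 M \iso N \otimes M$, and as $N\otimes M$ is bounded $\mathfrak{m}$-torsion it is already $L$-complete, so $G\circ F(M) = L_0(N\otimes L_0 M) \iso L_0(N\otimes M) = N\otimes M$ naturally in $M$. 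Hence $L_{s+t}(G\circ F)(M) = \Tor^R_{s+t}(N,M)$, giving the stated target.

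It then remains to rewrite the $\mathrm{E}^2$-page. Each $L_t M$ lies in $\hat{\mathscr{M}}$ and $N$ is a finitely generated $\mathfrak{m}$-torsion module, so Proposition~\ref{prop:hTor-Balanced} identifies $\widehat{\Tor}^R_s(N, L_t M) \iso \Tor^R_s(N, L_t M)$, which is the remaining equality in the statement; the spectral sequence is first quadrant and bounded since $L_t M = 0$ for $t > n$. The only genuinely delicate points are the verification of the acyclicity hypothesis together with the identification of $G\circ F$, and once these are in place the result is a formal consequence of the Grothendieck spectral sequence. I would expect the main obstacle to be bookkeeping rather than conceptual: one must ensure the natural isomorphism $N\otimes L_0 M\iso N\otimes M$ is compatible with the canonical comparison maps, so that the abutment is genuinely $\Tor^R_{*}(N,M)$ and not merely abstractly isomorphic to it.
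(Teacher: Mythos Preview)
Your proposal is correct and follows essentially the same approach as the paper, which even introduces the proposition as ``an associated composite functor spectral sequence''. The paper's proof is simply the explicit double-complex realisation of the Grothendieck machinery you invoke: it takes a finitely generated free resolution $P_*\to N$ and a free resolution $Q_*\to M$, forms $P_*\otimes L_0Q_*\iso L_0(P_*\otimes Q_*)$, and computes the two spectral sequences of this bicomplex directly, whereas you package the same computation as the abstract composite-functor spectral sequence for $L_0$ followed by $N\hotimes(-)$.
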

\begin{proof}
Let $P_*\lra N\ra 0$ and $Q_*\lra M\ra 0$ be free resolutions.
Since $R$ is Noetherian, we can assume that each $P_s$ is finitely
generated, so
\[
L_0(P_*\otimes Q_*) \iso P_*\otimes L_0Q_*.
\]
Taking first homology, then second homology, and using~\eqref{eqn:hatTor-NM}
together with the fact that each $L_0Q_t$ is projective
in $\hat{\mathscr{M}}$, we obtain
\begin{align*}
H^{II}_*H^I_*(P_*\otimes L_0Q_*)
        &= H^{II}_*\Tor^R_*(N,L_0Q_*) \\
        &= H^{II}_*\widehat{\Tor}^R_*(N,L_0Q_*) \\
        &= H^{II}_*\widehat{\Tor}^R_0(N,L_0Q_*) \\
        &= H^{II}_*(N\otimes L_0Q_*) \\
        &= H^{II}_*(N\otimes Q_*)  \\
        &= \Tor^R_*(N,M).
\end{align*}
The resulting spectral sequence collapses at its $\mathrm{E}^2$-term.
Taking second homology then first homology we obtain
\begin{align*}
H^I_*H^{II}_*(P_*\otimes L_0Q_*)
             &= H^I_*(P_*\otimes L_*M) \\
             &= \Tor^R_*(N,L_*M).
\end{align*}
This is the $\mathrm{E}^2$-term of a spectral sequence converging
to $\Tor^R_*(N,M)$ as claimed.
\end{proof}

\begin{lem}\label{lem:flat->pro-free}
Let $M$ be a flat $R$-module. Then
\[
L_sM =
\begin{cases}
M\sphat_{\mathfrak{m}} &\text{\rm if $s=0$}, \\
\ph{a}0 & \text{\rm otherwise},
\end{cases}
\]
and $L_0M$ is pro-free.
\end{lem}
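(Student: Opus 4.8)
The plan is to prove the two assertions in turn: first pin down $L_sM$ from the Greenlees--May sequence~\eqref{eqn:Ls-exactseq}, and then promote the resulting description of $L_0M$ to pro-freeness by constructing a pro-free cover and checking that it is an isomorphism.

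First I would exploit flatness to kill the higher $\Tor$ groups: $\Tor^R_s(R/\mathfrak{m}^k,M)=0$ for every $s>0$ and every~$k$. Substituting into~\eqref{eqn:Ls-exactseq}, for $s>0$ both the ${\lim_k}^1\Tor_{s+1}$ and the $\lim_k\Tor_s$ terms vanish, so $L_sM=0$; and the analogous $s=0$ sequence of~\cite[proposition~1.1]{JPCG-JPM} has vanishing ${\lim_k}^1\Tor_1$ term, leaving $L_0M\iso\lim_k M/\mathfrak{m}^kM=M\sphat_{\mathfrak{m}}$. This settles the first display.

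For pro-freeness I would build a minimal cover. Since $M/\mathfrak{m}M$ is free over $R/\mathfrak{m}$, choose a free $R$-module $F$ together with a map $g\:F\lra M$ inducing an isomorphism $\bar g\:F/\mathfrak{m}F\xrightarrow{\iso}M/\mathfrak{m}M$. By Proposition~\ref{prop:fg-L0} we have $(L_0M)/\mathfrak{m}(L_0M)\iso M/\mathfrak{m}M$, so the argument of Corollary~\ref{cor:Nakayama2} shows that the induced map $\phi=L_0g\:L_0F\lra L_0M$ is a surjection of $R$-modules. Writing $K=\ker\phi$ and using that the kernel of a map of $L$-complete modules is again $L$-complete, $K$ is $L$-complete and fits into a short exact sequence $0\ra K\ra L_0F\xrightarrow{\phi}L_0M\ra0$. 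By Proposition~\ref{prop:Nakayama} it then suffices to show $R/\mathfrak{m}\otimes K=0$.

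The hard part is this last step. Rather than trying to prove the vanishing $\Tor^R_1(R/\mathfrak{m},L_0M)=0$ (equivalently, the $R$-flatness of $M\sphat_{\mathfrak{m}}$, which would require the more delicate pro-vanishing of the towers $\{\Tor^R_s(R/\mathfrak{m}^k,R/\mathfrak{m})\}_k$), I would compute the comparison map directly. Applying $R/\mathfrak{m}\otimes(-)$ to the sequence and using that $\bar\phi$ is an isomorphism, the long exact $\Tor$-sequence collapses to $R/\mathfrak{m}\otimes K\iso\coker\bigl(\Tor^R_1(R/\mathfrak{m},L_0F)\lra\Tor^R_1(R/\mathfrak{m},L_0M)\bigr)$, so it suffices to see this comparison map is onto. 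For this I would establish, for any flat module~$N$, a natural isomorphism $\Tor^R_1(R/\mathfrak{m},L_0N)\iso L_1(N/\mathfrak{m}N)$: apply $L_*$ to $0\ra\mathfrak{m}N\ra N\ra N/\mathfrak{m}N\ra0$, use $\mathfrak{m}N\iso\mathfrak{m}\otimes N$ (flatness) and $L_1N=0$, and identify $L_0(\mathfrak{m}N)\iso\mathfrak{m}\otimes L_0N$ together with its canonical map to $L_0N$ via Proposition~\ref{prop:fg-L0}, whose kernel is exactly $\Tor^R_1(R/\mathfrak{m},L_0N)$. By naturality applied to~$g$ the comparison map becomes $L_1(\bar g)$, an isomorphism since $\bar g$ is; its cokernel therefore vanishes, so $R/\mathfrak{m}\otimes K=0$ and $K=0$ by Nakayama. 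Hence $\phi$ is an isomorphism and $L_0M\iso L_0F$ is pro-free. The step needing the most care is the naturality of $\Tor^R_1(R/\mathfrak{m},L_0N)\iso L_1(N/\mathfrak{m}N)$: it is this, and not any vanishing theorem, that forces the comparison map to be an isomorphism.
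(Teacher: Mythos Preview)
Your argument is correct, but it diverges from the paper's proof in the second half. For the computation of $L_sM$ you do exactly what the paper does. For pro-freeness, however, the paper takes a much shorter route: it invokes the composite-functor spectral sequence of Proposition~\ref{prop:Tor-L-SS} with $N=R/\mathfrak{m}$, which degenerates (since $L_tM=0$ for $t>0$ and $M$ is flat) to give $\Tor^R_s(R/\mathfrak{m},L_0M)=\Tor^R_s(R/\mathfrak{m},M)=0$ for all $s>0$, and then cites the Hovey--Strickland criterion \cite[theorem~A.9(3)]{MH-NS:666} characterising pro-free modules by this vanishing.

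Your approach trades those two black boxes for a direct construction: you build the pro-free cover explicitly and prove it is an isomorphism via Nakayama, the key input being the natural identification $\Tor^R_1(R/\mathfrak{m},L_0N)\iso L_1(N/\mathfrak{m}N)$ for flat~$N$. This is more self-contained---it uses only Proposition~\ref{prop:fg-L0}, the long exact sequence for $L_*$, and Proposition~\ref{prop:Nakayama}---and it yields an explicit isomorphism $L_0F\xrightarrow{\iso}L_0M$ rather than an abstract existence statement. The price is the somewhat delicate naturality check you flag at the end. It is worth noting that your identity, combined with the vanishing of $L_1$ on modules annihilated by~$\mathfrak{m}$, would actually recover the paper's intermediate conclusion $\Tor^R_1(R/\mathfrak{m},L_0M)=0$ directly; your naturality trick cleverly sidesteps having to justify that vanishing. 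The paper's route is quicker once the spectral sequence and the Hovey--Strickland criterion are on the table, and it gives the stronger statement that \emph{all} higher $\Tor^R_s(R/\mathfrak{m},L_0M)$ vanish.
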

\begin{proof}
For each $s\geq0$, the exact sequence of~\eqref{eqn:Ls-exactseq}
and the flatness of $M$ yield
\[
L_sM =
\begin{cases}
M\sphat_{\mathfrak{m}} &\text{if $s=0$}, \\
\ph{a}0 & \text{otherwise}.
\end{cases}
\]
The spectral sequence of Proposition~\eqref{prop:Tor-L-SS}
with $N=R/\mathfrak{m}$ degenerates so that for each
$s>0$ we obtain
\[
\Tor^R_s(R/\mathfrak{m},L_0M)=\Tor^R_s(R/\mathfrak{m},M)=0,
\]
hence $L_0M$ is pro-free by~\cite[theorem~A.9(3)]{MH-NS:666}.
\end{proof}

If $M$ is a finitely generated $R$-module then it has bounded
$\mathfrak{m}$-torsion, hence by~\cite[theorem~1.9]{JPCG-JPM},
$L_0M=M\sphat_{\mathfrak{m}}$ and $L_sM=0$ for $s>0$. More
generally, if $F$ is a free module, then $F\otimes M$ has
bounded $\mathfrak{m}$-torsion, so
\begin{align*}
L_s(F\otimes M) =
\begin{cases}
F\sphat_{\mathfrak{m}}\otimes M & \text{if $s=0$}, \\
\ph{000}0 & \text{if $s\neq0$}.
\end{cases}
\end{align*}
If we choose a basis for $F$, we can write $F=\bigoplus_\alpha R$,
and the last observation amounts to the vanishing of the higher
derived functors of the coproduct functor
\[
\hat{\mathscr{M}}_{\mathrm{fg}} \lra \hat{\mathscr{M}};
\quad
M\mapsto L_0\biggl(\bigoplus_\alpha M_\alpha\biggr)
\]
defined on the subcategory of completions of finitely generated
modules (which is the same as the subcategory of finitely generated
$\hat R$\;-modules). Now for any resolution $P_*\lra M\ra0$ of
a finitely generated module $M$ by finitely generated projectives,
we have
\[
L_0(F\otimes P_*) \iso L_0F\otimes P_*.
\]
The left hand side has as its homology the above derived
functors, so
\[
H_*L_0(F\otimes P_*) = L_0(F\otimes M) = L_0F\otimes M,
\]
while the right hand side has homology
\[
H_*(L_0F\otimes P_*) = \Tor^R_*(L_0F,M).
\]
So for $s>0$, $\Tor^R_s(L_0F,M) = 0$.

For $P\in\hat{\mathscr{M}}$, the functor on $\hat{\mathscr{M}}$
given by $M\mapsto P\hotimes M$ is right exact. We say that $P$
is \emph{$L$-flat} if the functor $P\hotimes(-)$ is exact on
$\hat{\mathscr{M}}$. However, the $L$-flat modules are easily
identified, at least when $n=\dim R=1$, because of the
following result.
\begin{prop}\label{prop:Flat->profree}
Let $P\in\hat{\mathscr{M}}$ be $L$-flat. Then $P$ is pro-free.
\end{prop}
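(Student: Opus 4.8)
The plan is to show that an $L$-flat module $P$ behaves, against finitely generated torsion test modules, exactly like a flat (hence pro-free) module, and then invoke the pro-freeness criterion already used in Lemma~\ref{lem:flat->pro-free} (namely \cite[theorem~A.9(3)]{MH-NS:666}), which asserts that $P\in\hat{\mathscr{M}}$ is pro-free precisely when $\Tor^R_s(R/\mathfrak{m},P)=0$ for all $s>0$. So the goal reduces to verifying this single vanishing condition for an $L$-flat $P$.

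First I would translate $L$-flatness into a statement about $\widehat{\Tor}$. By definition, $P\hotimes(-)$ being exact on $\hat{\mathscr{M}}$ means precisely that the left derived functors vanish, \ie\ $\widehat{\Tor}^R_s(P,N)=0$ for all $s>0$ and all $N\in\hat{\mathscr{M}}$. Now I would feed in the test module $N=R/\mathfrak{m}$, which is a finitely generated $\mathfrak{m}$-torsion module and hence $L$-complete by Proposition~\ref{prop:fg-L0}. Proposition~\ref{prop:hTor-Balanced} then applies and gives the crucial identification
\[
\widehat{\Tor}^R_s(P,R/\mathfrak{m}) \iso \Tor^R_s(P,R/\mathfrak{m}) \iso \widehat{\Tor}^R_s(R/\mathfrak{m},P).
\]
The left-hand group vanishes for $s>0$ by $L$-flatness, so the ordinary $\Tor^R_s(R/\mathfrak{m},P)$ vanishes for $s>0$ as well; equivalently $\Tor^R_s(R/\mathfrak{m},P)=0$ for all $s>0$. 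This is exactly the hypothesis of the pro-freeness criterion, so $P$ is pro-free.

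The point to be careful about is that one must test against the single module $R/\mathfrak{m}$ rather than against a pro-free resolution of $P$ itself, since $\widehat{\Tor}$ is \emph{not} balanced in general, as emphasised in the discussion preceding Proposition~\ref{prop:hTor-Balanced}. The whole argument hinges on the fact that $R/\mathfrak{m}$ is finitely generated $\mathfrak{m}$-torsion, which is the one situation in which the balancing isomorphism of Proposition~\ref{prop:hTor-Balanced} is available and lets us convert the vanishing of $\widehat{\Tor}^R_s(P,R/\mathfrak{m})$ (controlled by $L$-flatness) into vanishing of $\widehat{\Tor}^R_s(R/\mathfrak{m},P)=\Tor^R_s(R/\mathfrak{m},P)$ (controlled by the pro-freeness criterion). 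Thus the main obstacle is not a computation but rather recognising that the non-balanced bifunctor $\widehat{\Tor}$ \emph{does} balance on exactly this test module, which neatly bridges the two sides.
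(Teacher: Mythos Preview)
Your argument is correct, but it follows a different route from the paper's. The paper proceeds by a direct Nakayama-style construction: choose a free $F$ with $F/\mathfrak{m}F\cong P/\mathfrak{m}P$, lift to $\hat f\colon L_0F\to P$, show surjectivity by Nakayama, and then tensor the short exact sequence $0\to K\to L_0F\to P\to 0$ with $R/\mathfrak{m}$; $L$-flatness of $P$ (applied, in effect, to the sequence $0\to\mathfrak{m}\to R\to R/\mathfrak{m}\to 0$ together with Proposition~\ref{prop:fg-L0}) forces $\Tor^R_1(R/\mathfrak{m},P)=0$, so $K/\mathfrak{m}K=0$ and Nakayama again gives $K=0$. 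Your approach instead packages the same $\Tor$-vanishing through Proposition~\ref{prop:hTor-Balanced} and then hands the conclusion off to the black-box criterion \cite[theorem~A.9(3)]{MH-NS:666}. Your version is shorter and makes transparent exactly which external input carries the weight; the paper's version is more self-contained, needing only the vanishing of a single $\Tor_1$ rather than the full balancing statement, and essentially reproves the relevant implication of the Hovey--Strickland criterion by hand.
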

\begin{proof}
The proof is similar to that of Corollary~\ref{cor:Nakayama2}
and is based on a standard argument for finitely presented
flat modules over a local ring. Choose a free $R$-module $F$
for which $F/\mathfrak{m}F\iso P/\mathfrak{m}P$. If $f\:F\lra P$
is a homomorphism covering this isomorphism, there is an
extension to a homomorphism $\hat{f}\:F\sphat_{\mathfrak{m}}\lra P$.
Then we have
\[
\im\hat{f}+\mathfrak{m}P=P
\]
and so
\[
\mathfrak{m}(P/\im\hat{f})=P/\im\hat{f},
\]
hence $\im\hat{f}=P$ by Nakayama's Lemma.

Let $K=\ker\hat{f}$. Tensoring the exact sequence
\[
0\ra K\lra F\sphat_{\mathfrak{m}}\lra P \ra 0
\]
with $R/\mathfrak{m}$, by flatness of $P$ we obtain the
exact sequence
\[
0\ra K/\mathfrak{m}K\lra F/\mathfrak{m}F
              \xrightarrow{\iso} P/\mathfrak{m}P \ra 0
\]
so $K/\mathfrak{m}K=0$. Hence $K=0$ by Nakayama's Lemma.
\end{proof}

In general, tensoring with a pro-free module need not be
left exact on $\hat{\mathscr{M}}$ as is shown by an example
in Appendix~\ref{sec:App2}. In particular, when $n>1$,
infinitely generated pro-free modules may not always be
flat. Instead we can restrict attention to $L$-flatness
on restricted classes of $L$-complete modules. We say that
$P$ is \emph{weakly $L$-flat} if the functor
\[
{\hat{\mathscr{M}}}_{\mathrm{bt}}\lra{\hat{\mathscr{M}}};
\quad
M\mapsto P\hotimes M
\]
is exact on the subcategory ${\hat{\mathscr{M}}}_{\mathrm{bt}}$
of bounded $\mathfrak{m}$-torsion modules. Then if $Q$ is a flat
module, $L_0Q$ is weakly $L$-flat since for any
$N\in{\hat{\mathscr{M}}}_{\mathrm{bt}}$,
\[
L_0Q\hotimes N \iso L_0(Q\otimes N) \iso Q\hotimes N.
\]

\section{$L$-complete Hopf algebroids}\label{sec:L-HA}

To ease notation, from now on we assume that $(R,\mathfrak{m})$
is a commutative Noetherian regular local ring which is
$\mathfrak{m}$-adically complete, \ie, $R=\hat{R}=R\sphat_{\mathfrak{m}}$.
We assume that $R$ is an algebra over some chosen local subring
$(\k_0,\mathfrak{m}_0)$ so that the inclusion map is local, \ie,
$\mathfrak{m}_0=\k_0\cap\mathfrak{m}$. We write $\k=R/\mathfrak{m}$
for the residue field.

Let $\Gamma\in\mathscr{M}_{\k_0}$. We need to assume extra structure
on $\Gamma$ to define the notion of an \emph{$L$-complete Hopf
algebroid}. Unfortunately this is quite complicated to describe.

%

A (non-unital) \emph{ring object} $A\in\mathscr{M}_{\k_0}$ is
equipped with a product morphism $\phi\:A\otimes_{\k_0}A\lra A$
which is associative, \ie, the following diagram commutes.
\[
\xymatrix{
A\otimes_{\k_0} A \otimes_{\k_0} A \ar[rr]^{\ph{ab}\id\otimes\phi}\ar[d]_{\id\otimes\phi}
                                       && A\otimes_{\k_0} A\ar[d]^{\phi} \\
A\otimes_{\k_0} A\ar[rr]^{\phi} && A
}
\]
It is commutative if
\[
\xymatrix{
A \otimes_{\k_0} A \ar[rr]^{\ph{ab}\mathrm{switch}}\ar[dr]_{\phi}
                                       && A\otimes_{\k_0}A\ar[ld]^{\phi} \\
 & A &
}
\]
also commutes. An \emph{$R$-unit} for $\phi$ is a $\k_0$-algebra
homomorphism $\eta\:R\lra A$.

\begin{defn}\label{defn:biunital}
A ring object is \emph{$R$-biunital} if it has two units
$\eta_L,\eta_R\:R\lra A$ which extend to give a morphism
$\eta_L\otimes\eta_R\:R\otimes_{\k_0}R\lra A$.
\end{defn}

To distinguish between the two $R$-module or $R$-module
structures on $A$, we will write ${}_RA$ and $A_R$. When
discussing $A_R$ we will emphasise the use of the right
module structure whenever it occurs. In particular, from
now on tensor products over $R$ are to be interpreted as
bimodule tensor products ${}_R\otimes_R$, even though we
often write $\otimes$.

\begin{defn}\label{defn:biunital-Lcomplete}
An $R$-biunital ring object $A$ is \emph{$L$-complete} if
$A$ is $L$-complete as both a left and a right $R$-module.
\end{defn}

\begin{defn}\label{defn:L-compHA}
Suppose that $\Gamma$ is an $L$-complete commutative $R$-biunital
ring object with left and right units $\eta_L,\eta_R\:R\lra\Gamma$,
and has the following additional structure:
\begin{itemize}
\item
a \emph{counit}: a $\k_0$-algebra homomorphism $\epsilon\:\Gamma\lra R$;
\item
a \emph{coproduct}: a $\k_0$-algebra homomorphism
$\psi\:\Gamma\lra\Gamma\hotimes\Gamma=\Gamma_R\hotimes{}_R\Gamma$;
\item
an \emph{antipode}: a $\k_0$-algebra homomorphism
$\chi\:\Gamma\lra\Gamma$.
\end{itemize}
Then $\Gamma$ is an \emph{$L$-complete Hopf algebroid} if
\begin{itemize}
\item
with this structure, $\Gamma$ becomes a cogroupoid object,
\item
if $\Gamma$ is pro-free as a left (or equivalently as a right)
$R$-module,
\item
the ideal $\mathfrak{m}\lhd R$ is \emph{invariant}, \ie,
$\mathfrak{m}\Gamma=\Gamma\mathfrak{m}$.
\end{itemize}
We often denote such a pair by $(R,\Gamma)$ when the structure
maps are clear.
\end{defn}

The cogroupoid condition is essentially the same as that spelt
out in~\cite[definition~A1.1.1]{DCR:Book2} by interpreted in
the the context of $L$-complete bimodules. In particular we
have a relationship between the two notions of $L$-completeness
for $\Gamma$ since the antipode $\chi$ induces an isomorphism
of $R$-modules $\chi\:{}_R\Gamma\iso\Gamma_R$. The pro-freeness
condition is a disguised version of flatness required to do
homological algebra, and

\begin{defn}\label{defn:L-HAcomodule}
Let $(R,\Gamma)$ be an $L$-complete Hopf algebroid and let
$M\in\hat{\mathscr{M}}$. Then an $R$-module homomorphism
$\rho\:M\lra\Gamma\hotimes M$ makes $M$ into a \emph{left
$(R,\Gamma)$-comodule} or \emph{$\Gamma$-comodule} if the
following diagrams commute.
\[
\xymatrix{
M \ar[rr]^{\ph{ab}\rho}\ar[d]_{\rho}
                       && \Gamma\hotimes M\ar[d]^{\psi\otimes\id} \\
\Gamma\hotimes M\ar[rr]^{\rho\otimes\id} && \Gamma\hotimes\Gamma\hotimes M
}
\qquad
\xymatrix{
M\ar[r]^{\rho}\ar[rd]_{\iso} & \Gamma\hotimes M\ar[d]^{\epsilon\otimes\id} \\
 & R\hotimes M
}
\]
There is a similar definition of a \emph{right $\Gamma$-comodule}.
\end{defn}

Let $(R,\Gamma)$ be an $L$-complete Hopf Algebroid. Then given
a morphism of $\Gamma$-comodules $\theta\:M\lra N$, there is
a commutative diagram of solid arrows
\[
\xymatrix{
0\ar[r] & \ker\theta \ar[r]\ar@{.>}[d] & M \ar[r]^{\theta}\ar[d]^{\psi}
                                                          & N\ar[d]^{\psi}\\
        & \Gamma\hotimes\ker\theta \ar[r]
        & \Gamma\hotimes M \ar[r]^{\id\otimes\theta} & \Gamma\hotimes N
}
\]
but if $\id\otimes\theta$ is not a monomorphism then the dotted
arrow may not exist or be unique. If $\Gamma\hotimes(-)$ always
preserved exactness then this would not present a problem, but
this is not so easily ensured in great generality.

If $\Gamma$ is pro-free, then as already noted, $\Gamma\hotimes(-)$
is exact on the categories ${\hat{\mathscr{M}}}_{\mathrm{bt}}$ and
$\hat{\mathscr{M}}_{\mathrm{fg}}$, so in each of these contexts
the above diagram always has a completion by a unique dotted arrow.
Therefore the categories of $\Gamma$-comodules in
${\hat{\mathscr{M}}}_{\mathrm{bt}}$ and $\hat{\mathscr{M}}_{\mathrm{fg}}$
are abelian since they have kernels and all the other axioms are
satisfied.

\begin{examp}\label{examp:HA->L-HA}
Let $(R,\Gamma)$ be a flat Hopf algebroid over the commutative
Noetherian regular local ring~$R$, and assume that
$\mathfrak{m}\Gamma=\Gamma\mathfrak{m}$. By
Lemma~\ref{lem:flat->pro-free},
\[
L_0({}_R\Gamma) = \Gamma\sphat_{\mathfrak{m}} = L_0(\Gamma_R),
\]
where $\Gamma\sphat_{\mathfrak{m}}$ denotes the completion
with respect to the ideal $\mathfrak{m}\Gamma$ which equals
$\Gamma\mathfrak{m}$.
\end{examp}

\begin{defn}\label{defn:Invtideal}
Let $(R,\Gamma)$ be a Hopf algebroid over a local ring
$(R,\mathfrak{m})$ or an $L$-complete Hopf algebroid.
\begin{itemize}
\item
The maximal ideal $\mathfrak{m}\lhd R$ is \emph{invariant}
if $\mathfrak{m}\Gamma=\Gamma\mathfrak{m}$. More generally,
a subideal $I\subseteq\mathfrak{m}$ is \emph{invariant}
if $I\Gamma=\Gamma I$.
\item
An $(R,\Gamma)$-comodule $M$ is \emph{discrete} if for each
element $x\in M$, there is a $k\geq1$ for which
$\mathfrak{m}^kx=\{0\}$; if $M$ is also finitely generated
as an $R$-module, then~$M$ is discrete if and only if there
is a $k_0$ such that $\mathfrak{m}^{k_0}M=\{0\}$.
\item
An $(R,\Gamma)$-comodule $M$ is \emph{finitely generated}
if it is finitely generated as an $R$-module.
\end{itemize}
\end{defn}

If~$M$ is a $(R,\Gamma)$-comodule, then for any invariant
ideal~$I$, $IM\subseteq M$ is a subcomodule.

If $(R,\Gamma)$ be a (possibly $L$-complete) Hopf algebroid
for which $\mathfrak{m}$ is invariant, then
$(\k,\Gamma/\mathfrak{m}\Gamma)$ is a Hopf algebroid over
the residue field $\k$. If a $\Gamma$-comodule is annihilated
by $\mathfrak{m}$ then it is also a
$\Gamma/\mathfrak{m}\Gamma$-comodule.

\section{Unipotent Hopf algebroids}\label{sec:UnipotentHA}

We start by recalling the notion of a \emph{unipotent} Hopf
algebra $H$ over a field $\boldsymbol{k}$ which can be found
in~\cite{Waterhouse}. This means that every $H$-comodule $V$
which is a finite dimensional $\boldsymbol{k}$-vector space
has primitive elements, or equivalently (by the Jordan-H\"older
theorem) it admits a composition series, \ie, a finite length
filtration by subcomodules
\begin{equation}\label{eqn:JH-filt}
V=V_m\supset V_{m-1}\supset\cdots\supset V_1\supset V_0=\{0\}
\end{equation}
with irreducible quotient comodules $V_k/V_{k+1}\iso\boldsymbol{k}$.
In particular, notice that $\boldsymbol{k}$ is the only finite
dimensional irreducible $H$-comodule. Reinterpreting $H$-comodules
as $H^*$-modules where $H^*$ is the $\boldsymbol{k}$-dual of
$H$, this implies that $H^*$ is a local ring, \ie, its
augmentation ideal is its unique maximal left ideal and
therefore agrees with its Jacobson radical.

Now given a Hopf algebroid $(R,\Gamma)$ over local ring
$(R,\mathfrak{m})$ with residue field $\k=R/\mathfrak{m}$
and invariant maximal ideal $\mathfrak{m}$, the resulting
Hopf algebroid $(\k,\Gamma/\mathfrak{m}\Gamma)$ need not be
a Hopf algebra. However, we can still make the following
definition.
\begin{defn}\label{defn:HA-unipotent0}
Let $(\boldsymbol{k},\Sigma)$ be a Hopf algebroid over a
field $\boldsymbol{k}$. Then $\Sigma$ is \emph{unipotent}
if every non-trivial finite dimensional $\Sigma$-comodule
$V$ has non-trivial primitives. Hence $\boldsymbol{k}$ is
the only irreducible $\Sigma$-comodule and every finite
dimensional comodule admits a composition series as
in~\eqref{eqn:JH-filt}.
\end{defn}

In the next result we make use of Definition~\ref{defn:Invtideal}.
\begin{thm}\label{thm:Main0}
Let $(R,\Gamma)$ be a Hopf algebroid over a Noetherian local
ring $(R,\mathfrak{m})$ for which $\mathfrak{m}$ is invariant
and suppose that $(\k,\Gamma/\mathfrak{m}\Gamma)$ is a unipotent
Hopf algebroid over the residue field~$\k$. Let $M$ be a
non-trivial finitely generated discrete $(R,\Gamma)$-comodule.
Then $M$ admits a finite-length filtration by subcomodules
\[
M=M_\ell\supset M_{\ell-1}\supset\cdots\supset M_1\supset M_0=\{0\}
\]
with trivial quotient comodules $M_k/M_{k+1}\iso\k$.
\end{thm}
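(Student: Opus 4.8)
The plan is to refine the $\mathfrak{m}$-adic filtration of $M$ using the unipotence of the residue Hopf algebroid. First I would use discreteness: since $M$ is finitely generated and discrete, Definition~\ref{defn:Invtideal} provides an integer $k_0\geq1$ with $\mathfrak{m}^{k_0}M=\{0\}$, giving a finite chain
\[
M\supseteq\mathfrak{m}M\supseteq\cdots\supseteq\mathfrak{m}^{k_0}M=\{0\}.
\]
Invariance of $\mathfrak{m}$ propagates to each power (from $\mathfrak{m}\Gamma=\Gamma\mathfrak{m}$ one gets $\mathfrak{m}^j\Gamma=\Gamma\mathfrak{m}^j$ by induction), so by the remark following Definition~\ref{defn:Invtideal} every $\mathfrak{m}^j M$ is a subcomodule of $M$. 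Thus the chain above is a filtration by subcomodules, and it only remains to refine it so that every successive quotient becomes trivial.

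Next I would analyse each layer $Q_j=\mathfrak{m}^j M/\mathfrak{m}^{j+1}M$. As a subquotient of the finitely generated module $M$ over the Noetherian ring $R$ it is finitely generated, and it is annihilated by $\mathfrak{m}$, so $Q_j$ is a finite-dimensional $\k$-vector space; being $\mathfrak{m}$-torsion it is moreover a comodule over $(\k,\Gamma/\mathfrak{m}\Gamma)$. Unipotence (Definition~\ref{defn:HA-unipotent0}) then supplies a composition series of $Q_j$ by $\Gamma/\mathfrak{m}\Gamma$-subcomodules all of whose successive quotients are $\iso\k$.

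Finally I would lift and concatenate. The $\mathfrak{m}$-annihilated subcomodules of $Q_j$ correspond, via preimage along the projection $\mathfrak{m}^j M\twoheadrightarrow Q_j$, exactly to the $\Gamma$-subcomodules of $\mathfrak{m}^j M$ lying between $\mathfrak{m}^{j+1}M$ and $\mathfrak{m}^j M$, and this correspondence preserves successive quotients. Pulling the composition series of each $Q_j$ back therefore interpolates a chain of $\Gamma$-subcomodules between $\mathfrak{m}^{j+1}M$ and $\mathfrak{m}^j M$ with all consecutive quotients $\iso\k$; splicing these for $j=k_0-1,\ldots,0$ yields the asserted finite-length filtration of $M$.

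The step demanding the most care is this last lifting. I must verify that the preimage of a $\Gamma/\mathfrak{m}\Gamma$-subcomodule of $Q_j$ is genuinely a $\Gamma$-subcomodule of $\mathfrak{m}^j M$ with the expected quotient; this is where flatness of $\Gamma$ over $R$ is used, since it makes $\Gamma\otimes(-)$ exact and hence the comodule category abelian, so that the usual correspondence between subobjects of a quotient and intermediate subobjects applies. One should also note that the $\Gamma$- and $\Gamma/\mathfrak{m}\Gamma$-comodule structures on the $\mathfrak{m}$-torsion module $Q_j$ determine the same subcomodules, so the series produced by unipotence is indeed a chain of $\Gamma$-subcomodules.
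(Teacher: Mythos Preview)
Your argument is correct and rests on the same ingredients as the paper's proof: the finite $\mathfrak{m}$-adic filtration of $M$ by subcomodules (via invariance of $\mathfrak{m}$) together with unipotence of $(\k,\Gamma/\mathfrak{m}\Gamma)$ on the $\mathfrak{m}$-annihilated pieces. The only difference is organisational: the paper extracts a single nonzero primitive from the bottom layer $\mathfrak{m}^{k_0-1}M$ and then inducts on the quotient of $M$ by its primitives, whereas you refine every layer $\mathfrak{m}^jM/\mathfrak{m}^{j+1}M$ at once and splice---two standard routes to a composition series that come to the same thing.
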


See~\cite{AB-LFT} for a precursor of this result. We will
refer to such filtrations as \emph{Landweber filtrations}.
\begin{proof}
The proof is similar to that used in~\cite{AB-LFT}. The idea
is to consider the descending sequence
\[
M\supseteq\mathfrak{m}M
     \supseteq\cdots\supseteq\mathfrak{m}^kM\supseteq\cdots
\]
which must eventually reach~$0$. So for some $k_0$,
$\mathfrak{m}^{k_0}M=0$ and $\mathfrak{m}^{k_0-1}M\neq0$.
The subcomodule
\[
\mathfrak{m}^{k_0-1}M\iso\mathfrak{m}^{k_0-1}M/\mathfrak{m}^{k_0}M
\]
becomes a comodule over $(\k,\Gamma/\mathfrak{m}\Gamma)$
and so it has non-trivial primitives since
$(\k,\Gamma/\mathfrak{m}\Gamma)$ is unipotent, and these are
also primitives with respect to $\Gamma$. Considering the
quotient $M/PM$, where $PM$ is the submodule of primitives,
now we can use induction on the length of a composition series
to construct the required filtration. Note that since $R$ is
local, its only irreducible module is its residue field~$\k$
which happens to be a comodule.
\end{proof}

Ravenel~\cite{DCR:Book2} introduced the associated Hopf
algebra $(A,\Gamma')$ to a Hopf algebroid $(A,\Gamma)$. When
the coefficient ring $A$ is a field, the relationship between
comodules over these two Hopf algebroids turns out to be
tractable as we will soon see.

Our next result provides a criterion for establishing when
a Hopf algebra is unipotent. We write $\otimes$ for
$\otimes_{\boldsymbol{k}}$.
\begin{lem}\label{lem:Unipotent}
Let $(H,\boldsymbol{k})$ be a Hopf algebra over a field. \\
\emph{(a)}
Suppose that
\[
\boldsymbol{k}=H_0\subseteq H_1 \subseteq\cdots\subseteq H_n
            \subseteq \cdots\subseteq H
\]
is an increasing sequence of $\boldsymbol{k}$-subspaces for
which $H=\bigcup_n H_n$ and
\[
\psi H_n \subseteq
H_0\otimes H_n + H_1\otimes H_{n-1} + \cdots + H_n\otimes H_0.
\]
Then $H$ is unipotent. Furthermore, the $H_n$ can be chosen
to be maximal and satisfy
\[
H_rH_s\subseteq H_{r+s}
\]
for all $r,s$.   \\
\emph{(b)}
Suppose that $H$ has a filtration as in \emph{(a)} and let $W$
be a non-trivial left $H$-comodule which is finite dimensional
over $\boldsymbol{k}$ and has coaction $\rho\:W\lra H\otimes W$.
Defining
\[
W_k = \rho^{-1}(H_k\otimes W)\subseteq W,
\]
we obtain an exhaustive strictly increasing filtration of $W$
by subcomodules
\[
\{0\}=W_{-1}\subset W_0\subset W_1\subset\cdots \subset W_\ell = W.
\]
\end{lem}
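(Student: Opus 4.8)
The plan is to check the assertions in turn, the substantive one being that each $W_k$ is a subcomodule. The inclusions $W_k\subseteq W_{k+1}$ are immediate from $H_k\subseteq H_{k+1}$ and the definition $W_k=\rho^{-1}(H_k\otimes W)$, and the identification $W_{-1}=\{0\}$ (taking $H_{-1}=\{0\}$) amounts to the injectivity of $\rho$, which follows from the comodule counit identity $(\epsilon\otimes\id)\rho=\id_W$. So the crux is closure under the coaction, and this is where the hypothesis on $\psi$ from part~(a) must be used.

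For the subcomodule property I would fix $w\in W_k$ and write $\rho(w)=\sum_{i=1}^r h_i\otimes w_i$ with $h_1,\dots,h_r\in H_k$ linearly independent (possible exactly because $\rho(w)\in H_k\otimes W$), so that the $w_i\in W$ are uniquely determined; the goal becomes showing each $w_i\in W_k$, for then $\rho(w)\in H_k\otimes W_k$. Choosing functionals $\lambda_m\in H^*$ with $\lambda_m(h_i)=\delta_{mi}$ and applying $\lambda_m\otimes\id\otimes\id$ to the coassociativity relation $(\psi\otimes\id)\rho(w)=(\id\otimes\rho)\rho(w)$, the right-hand side collapses to $\rho(w_m)$. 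On the left-hand side the key input is $\psi H_k\subseteq\sum_{a+b=k}H_a\otimes H_b$: since each $b\leq k$ gives $H_b\subseteq H_k$, the surviving factor $(\lambda_m\otimes\id)\psi(h_i)$ lands in $H_k$, so the left-hand side lies in $H_k\otimes W$. Comparing the two sides yields $\rho(w_m)\in H_k\otimes W$, i.e. $w_m\in W_k$, as desired.

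Exhaustiveness then follows from finite dimensionality: since $H=\bigcup_n H_n$ and $\rho(W)$ is spanned by finitely many simple tensors, $\rho(W)\subseteq H_N\otimes W$ for $N$ large, so $W=W_N$; as the $W_k$ form an increasing chain of subspaces of the finite-dimensional $W$, the chain stabilises and we put $W_\ell=W$. Discarding any repeated terms turns this into the strictly increasing chain displayed, each term still a subcomodule. Finally $W_0=\rho^{-1}(\boldsymbol{k}\otimes W)$ is, by the counit identity again, precisely the subspace of primitives $\{w:\rho(w)=1\otimes w\}$; its nonvanishing for nonzero $W$---which makes the bottom inclusion $W_{-1}\subset W_0$ genuine---is exactly the conclusion that $\boldsymbol{k}=H_0$ is the coradical, i.e. the unipotence recorded in part~(a).

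The main obstacle is the subcomodule step. The delicate points there are (i) to represent $\rho(w)$ with the elements of the \emph{first} tensor factor linearly independent, so that dual functionals $\lambda_m$ exist and the right-hand side of coassociativity cleanly returns $\rho(w_m)$, and (ii) to recognise that the coproduct condition $\psi H_k\subseteq\sum_{a+b=k}H_a\otimes H_b$ is precisely what confines the surviving left-hand factor to $H_k$. Once this pairing is arranged the remaining verifications are routine.
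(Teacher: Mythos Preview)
Your argument is correct and the core step---showing each $W_k$ is a subcomodule by applying coassociativity and using $\psi H_k\subseteq\sum_{a+b=k}H_a\otimes H_b\subseteq H_k\otimes H_k$---is exactly the paper's approach; the paper picks a basis $t_1,\dots,t_d$ of $H_k$ where you use linearly independent $h_i$ and dual functionals, but these are equivalent formulations of the same computation.

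The one place you diverge is in handling the phrase ``strictly increasing'': the paper defers this to the proof of part~(a) in Waterhouse, whereas you obtain it by discarding repeated terms. Your version is arguably the more honest one, since literal strict increase $W_k\subsetneq W_{k+1}$ at \emph{every} index need not hold for an arbitrary filtration satisfying the hypotheses of~(a) (it does hold if the $H_n$ are chosen maximally, which is the coradical filtration implicit in Waterhouse's argument). After reindexing, both routes produce the same chain of subcomodules.
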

\begin{proof}
(a) This is part of the theorem of~\cite[section~8.3]{Waterhouse}.
The proof actually shows that the filtration by subspaces defined
in (b) is strictly increasing. \\
(b) The fact that $W_k$ is a subcomodule follows by comparing the
two sides of the equation
\[
(\Id\otimes\rho)\rho(w)=\rho(\psi\otimes\Id)\rho(w)
\]
for $w\in W_k$. Thus if we choose a basis $t_1,\ldots,t_d$ for $H_k$
and write
\[
\rho(w)= \sum_j t_j\otimes w_j
\]
for some $w_i\in W_k$, then for suitable $a_{i,r,s}\in\boldsymbol{k}$
we have
\[
\psi(t_i) = \sum_{r,s} a_{i,r,s}t_r\otimes t_s
\]
since $\psi(t_i)\in\sum_i H_i\otimes H_{k-i}\subseteq H_k\otimes H_k$.
Therefore
\[
\sum_i t_i\otimes \rho(w_i)=\sum_{j,r,s} a_{j,r,s}t_r\otimes t_s\otimes w_j,
\]
and comparing the coefficients of the left hand $t_i$, we obtain
\[
\rho(w_i) = \sum_{j,s} a_{j,i,s} t_s\otimes w_j \in H_k\otimes W.
\]
This shows that each $w_i\in W_k$, so the coproduct restricted
to $W_k$ satisfies $\rho W_k\subseteq H\otimes W_k$.
\end{proof}

\begin{examp}\label{examp:Unipotent-StAlg}
Let $p$ be an odd prime and let
\[
\mathscr{P}_* = \F_p[\zeta_k:k\geq1]
\]
be the (graded) polynomial sub-Hopf algebra of the mod~$p$
dual Steenrod algebra $\mathscr{A}_*$ with coaction
\[
\psi\zeta_n = \sum_{r=0}^n\zeta_r\otimes \zeta_{n-r}^{p^r},
\]
where $\zeta_0=1$. Then $(\mathscr{P}_*,\F_p)$ is unipotent
since the subspaces
\[
\mathscr{P}(n)_* = \F_p[\zeta_k:1\leq k\leq n]
\]
satisfy the conditions of Lemma~\ref{lem:Unipotent}. This
shows that $\mathscr{P}^*$ is a local ring.

If $p=2$, this also applies to the mod~$2$ dual Steenrod
algebra and implies that $\mathscr{A}^*$ is a local ring.
\end{examp}

For details on the next example, see the books by Ravenel and
Wilson~\cite{DCR:Book2,WSW:BPSampler}. Unfortunately the
sub-Hopf algebra $K(n)_*(E(n))$ is commonly denoted $K(n)_*K(n)$
in the earlier literature, but at the behest of the referee we
refrain from perpetuating that usage.
\begin{examp}\label{examp:Unipotent-MoravaStAlg}
Let $p$ be an odd prime and let $K(n)$ be the $n$-th $p$-primary
Morava $K$-theory. Then $K(n)_*=\F_p[v_n,v_n^{-1}]$, with
$v_n\in K(n)_{2(p^n-1)}$. There is a graded Hopf algebra over
$K(n)_*$,
\[
\Gamma(n)_* = K(n)_*(E(n)) =
K(n)_*[t_k:k\geq1]/(v_nt_\ell^{p^n}-v_n^{p^\ell}t_\ell:\ell\geq1),
\]
where $t_k\in\Gamma(n)_{2(p^k-1)}$ and $E(n)$ is a Johnson-Wilson
spectrum. In fact $\Gamma(n)_*$ is a proper sub-Hopf algebra of
$K(n)_*(K(n))$. Using standard formulae, it follows that the
$K(n)_*$-subspaces
\[
\Gamma(n,m)_* = K(n)_*(t_1,\ldots,t_m) \subseteq \Gamma_*
\]
satisfy the conditions of Lemma~\ref{lem:Unipotent}, therefore
$\Gamma(n,m)_*$ is unipotent. When $p=2$, the Hopf algebra
$\Gamma(n)_*$ is also unipotent even though $K(n)$ is not
homotopy commutative.
\end{examp}

Here is a major source of examples which includes the algebraic
ingredients used in~\cite{AB-LFT} to prove the existence of
a Landweber filtration for discrete comodules over the Hopf
algebroid of Lubin-Tate theory. For two topologised objects
$X$ and $Y$ we denote the set of all continuous maps $X\lra Y$
by $\Mapc(X,Y)$.
\begin{examp}\label{examp:dualpro-groupring}
Let $G$ be a pro-$p$-group and suppose that $G$ acts continuously
(in the sense that the action map $G\times R\lra R$ is continuous)
by ring automorphisms on $R$ which are continuous with respect to
the $\mathfrak{m}$-adic topology. Then $(R,\Mapc(G,R))$ admits
the structure of an $L$-complete Hopf algebroid, see the Appendix
of~\cite{AB-LFT} for details. This structure is dual to one on
the twisted group algebra $R[G]$. Here $\mathfrak{m}$ is invariant.
If the residue field $\k$ has characteristic~$p$, then
$(\k,\Mapc(G,R)/\mathfrak{m})$ is the continuous dual of the
pro-group ring
\[
\k[G] = \lim_{N\lhd G} \k[G/N],
\]
where $N$ ranges over the finite index normal subgroups of~$G$.
Each finite group ring $\k[G/N]$ is local since its augmentation
ideal is nilpotent, hence its only irreducible module is the
trivial module. From this it easily follows that the dual Hopf
algebra $(\k,\Mapc(G,R)/\mathfrak{m})$ is unipotent.

In each of the examples we are interested in, there is a filtration
\[
G=G_0\supset G_1\supset\cdots\supset G_k\supset G_{k-1}\supset\cdots
\]
by finite index normal subgroups $G_k\lhd G$ satisfying $\bigcap_k G_k=\{1\}$,
and the images of the natural maps
\[
\Map(G/G_k,R)\lra\Mapc(G,R)
\]
induced by the quotient maps $G\lra G/G_k$ define a filtration
with the properties listed in Lemma~\ref{lem:Unipotent}(a).
\end{examp}

\section{Unicursal Hopf algebroids}\label{sec:Unicursal}

The notion of a \emph{unicursal} Hopf algebroid $(A,\Psi)$
appeared in~\cite{DCR:Book2}, see definition~A1.1.11. It
amounts to requiring that for the subring
\[
A^\Psi = A \square_\Psi A \subseteq A \otimes_A A \iso A
\]
we have
\[
\Psi  = A\otimes_{A^\Psi} A.
\]
If $A$ is a flat $A^\Psi$-algebra then $\Psi$ is a flat
$A$-algebra. But the requirement that $A^\Psi$ is the
equalizer of the two homomorphisms $A\lra A\otimes_{A^\Psi}A$
is implied by faithful flatness, see the second theorem
of~\cite[section~13.1]{Waterhouse}.

Unicursal Hopf algebroids were introduced by Ravenel~\cite{DCR:Book2}.
However, his lemma~A1.1.13 has a correct statement for (b),
but the statement for (a) appears to be incorrect. The proofs
of (a) and (b) both appear to have minor errors or gaps. In
particular the flatness of $\Psi$ as an $A$-module is required.
Therefore we provide a slight modification of the proof given
by Ravenel. Note that we work with left rather than right
comodules.
The formulation and proof, clarifying our earlier version,
owe much to the comments of Geoffrey Powell and the referee,
particularly the relationship to descent arguments based on
faithful flatness.
\begin{lem}\label{lem:Unicursal-modules}
Let $(A,\Psi)$ be a unicursal Hopf algebroid where $A$ is
flat over $A^\Psi$. Let $M$ be a left $\Psi$-comodule. Then
there is an isomorphism of comodules
\[
M \iso A\otimes_{A^\Psi}(A\square_\Psi M),
\]
where the coaction on the right hand comodule comes from
the $\Psi$-comodule structure on $A$. In particular, if\/
$M$ is non-trivial then the primitive subcomodule\/
$A\square_\Psi M$ is non-trivial.
\end{lem}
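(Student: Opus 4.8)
The plan is to treat a left $\Psi$-comodule as a descent datum for the ring extension $A^\Psi\to A$ and then run the usual faithfully flat descent argument. Throughout write $B=A^\Psi$, so that by the unicursal hypothesis $\Psi=A\otimes_B A$ and $B=A\square_\Psi A$.

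First I would simplify the coaction. For a left $A$-module $M$, the unicursal identity gives
\[
\Psi\otimes_A M=(A\otimes_B A)\otimes_A M\iso A\otimes_B M,
\]
so a comodule structure is a map $\psi_M\:M\lra A\otimes_B M$. I would check that the counit axiom says the composite of $\psi_M$ with the multiplication $A\otimes_B M\lra M$ is the identity, and that coassociativity, together with the antipode, encodes the cocycle condition of a descent datum. Under these identifications the primitive subcomodule becomes the equalizer
\[
A\square_\Psi M=\{\,m\in M : \psi_M(m)=1\otimes m \ \text{in}\ A\otimes_B M\,\},
\]
on which $B$ acts through $B\subseteq A$; for $M=A$ with its canonical coaction this recovers $A\square_\Psi A=B$, which is the unicursal hypothesis itself.

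Next I would exhibit the natural comodule map
\[
\mu\:A\otimes_B(A\square_\Psi M)\lra M,\qquad a\otimes m\mapsto a\.m,
\]
and prove that it is an isomorphism. The conceptual route is that $N\mapsto A\otimes_B N$ (carrying the coaction inherited from $A$) and $M\mapsto A\square_\Psi M$ are mutually inverse equivalences between $B$-modules and $\Psi=A\otimes_B A$-comodules; this is precisely faithfully flat descent, as in \cite[section~13.1]{Waterhouse}. Concretely, after applying the flat functor $A\otimes_B(-)$ the extension acquires the splitting furnished by the counit $\epsilon\:\Psi\lra A$, the comodule axioms then provide an explicit inverse to $\mu\otimes_B A$, and one descends the isomorphism back along $B\to A$.

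The main obstacle will be securing exactly this descent step under the stated hypotheses. Classical descent requires $A$ to be \emph{faithfully} flat over $B$, whereas the lemma assumes only flatness together with the unicursal equalizer identity $A\square_\Psi A=B$; the latter is what supplies the left-exactness of the relevant Amitsur complex that flatness alone would not guarantee, and in the principal application, where $A$ is a field, faithful flatness holds automatically. Once $\mu$ is known to be an isomorphism the final assertion is immediate: if $M\neq0$ then $A\otimes_B(A\square_\Psi M)\iso M\neq0$, forcing $A\square_\Psi M\neq0$, since $A\square_\Psi M=0$ would give $M\iso A\otimes_B 0=0$.
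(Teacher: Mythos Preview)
Your conceptual framework is exactly right and matches the paper's own Remark following the lemma: comodules over a unicursal Hopf algebroid are descent data, and the result is an instance of descent along $A^\Psi\to A$. The difference lies in execution, and there is a genuine gap in your argument as stated.

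You propose to construct the multiplication map $\mu\:A\otimes_B(A\square_\Psi M)\to M$, check that $A\otimes_B\mu$ is an isomorphism, and then ``descend the isomorphism back along $B\to A$''. That last step is precisely where faithful flatness is required: knowing that $A\otimes_B f$ is an isomorphism only lets you conclude that $f$ is when $A\otimes_B(-)$ reflects isomorphisms. You flag this obstacle yourself, but the claim that the unicursal equalizer identity $A\square_\Psi A=B$ ``supplies the left-exactness of the relevant Amitsur complex'' does not actually close the gap; that identity is about the beginning of the Amitsur complex for $A$ itself, not about detecting isomorphisms between arbitrary $B$-modules.

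The paper sidesteps this entirely by working with the coaction $\rho$ rather than your $\mu$. Coassociativity, read as $(\Id_A\otimes\rho)\rho=(\Id_A\otimes 1\otimes\Id_M)\rho$, says that $\im\rho$ lies in the kernel of $\Id_A\otimes(\rho-1\otimes\Id_M)$; flatness of $A$ over $B$ identifies that kernel with $A\otimes_B(A\square_\Psi M)$. The counit splitting makes $\rho$ injective, and the formula $\rho(az)=a\otimes z$ for primitive $z$ shows $\rho$ surjects onto $A\otimes_B(A\square_\Psi M)$. Thus $\rho$ itself is the desired isomorphism, and no descent or faithful-flatness reflection step is ever invoked. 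Your map $\mu$ is then the inverse of $\rho$, so you were constructing the right thing; the missing idea is that $\rho$ can be analysed directly using only flatness.
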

\begin{proof}
The coaction on $M$ can be viewed as a map
$\rho\:M\lra A\otimes_{A^\Psi}M$. By coassociativity,
\[
(\Id_A\otimes\rho)\rho = (\eta_L\otimes\Id_M)\rho
                       = (\Id_A\otimes1\otimes\Id_M)\rho,
\]
hence
\[
\Id_A\otimes\rho-\Id_A\otimes1\otimes\Id_M\:
       \im\rho \lra A\otimes_{A^\Psi}A\otimes_{A^\Psi}M
\]
must be trivial.
By flatness of $A$,
\[
\xymatrix{
0\ra A\otimes_{A^\Psi}\ker(\rho-1\otimes\Id_M) \ar[r]
 & A\otimes_{A^\Psi}M \ar[rr]^{\substack{\Id_A\otimes\rho\\-\Id_A\otimes1\otimes\Id_M}\quad}
 && A\otimes_{A^\Psi}A\otimes_{A^\Psi}M
}
\]
is exact, so
\[
\im\rho\subseteq A\otimes_{A^\Psi}\ker(\rho-1\otimes\Id_M)
                      = A\otimes_{A^\Psi}(A\square_\Psi M).
\]

Since $\rho\:M\lra \Psi\otimes_A M$ is split by the augmentation
\[
\epsilon\otimes\Id_M\:\Psi\otimes_A M\lra A\otimes_A M \iso M,
\]
$\rho$ is a monomorphism. For each coaction primitive
$z\in A\square_{\Psi}M$ and $a\in A$, we have
\[
\rho(az) =  a\otimes z\in A\otimes_{A^\Psi}(A\square_{\Psi}M),
\]
hence $\im\rho=A\otimes_{A^\Psi}(A\square_{\Psi}M)$. So we have
shown that
\[
M\iso A\otimes_{A^\Psi}(A\square_{\Psi}M).
\qedhere
\]
\end{proof}

\begin{rem}\label{rem:Unicursal-modules}
The above algebra can be interpreted scheme-theoretically as
follows. Given a flat morphism of affine schemes $f\:X\lra Y$,
$X\times_Y X$ becomes a groupoid scheme with a unique morphism
$u\ra v$ whenever $f(u)=f(v)$. Comodules for the representing
Hopf algebroid are equivalent to $\mathcal{O}_X$-modules with
descent data, and the category of such comodules is equivalent
to that of $\mathcal{O}_Y$-modules. See~\cite[section~17.2]{Waterhouse}
for an algebraic version of this when $f$ is faithfully flat.
\end{rem}

\begin{examp}\label{examp:Unicursal-Galextn}
Let $R$ be a commutative ring and let $G$ be a finite group
which acts faithfully on $R$ by ring automorphisms so that
$R^G\lra R$ is a $G$-Galois extension in the sense of~\cite{CHR}.
Thus there is an isomorphism of rings
\begin{equation}\label{eqn:Unicursal-Galextn}
R\otimes_{R^G}R \iso R\otimes_{R^G}R^GG^*,
\end{equation}
where the dual group ring is $R^GG^*=\Map(G,R^G)$. The left
hand side is visibly a Hopf algebroid and as an $R^G$-module,
$R$ is finitely generated projective, so
Lemma~\ref{lem:Unicursal-modules} applies.

Following the outline in~\cite{AB-LFT}, we can identify
\[
R\otimes_{R^G}R^GG^* \iso RG^*
\]
with the dual of the twisted group ring $R\sharp G$ and
thus it also carries a natural Hopf algebroid structure.
It is easy to verify that this structure agrees with that
on $R\otimes_{R^G}R$ under~\eqref{eqn:Unicursal-Galextn}.

Interpreting a $R\otimes_{R^G}R$-comodule $M$ as equivalent
to a $R\sharp G$-module, we can use the Galois theoretic
isomorphism $R\sharp G\iso\End_{R^G}R$ to show that there
is an isomorphism of $R\sharp G$-modules
\[
M \iso R\otimes_{R^G}M^G,
\]
and since
\[
M^G\iso R\square_{R\otimes_{R^G}R}M,
\]
this is a module theoretic interpretation of the comodule
result of Lemma~\ref{lem:Unicursal-modules}.
\end{examp}

Now we recall some facts from \cite[lemma~A1.1.13]{DCR:Book2}
about the extension of Hopf algebroids
\[
(D,\Phi) \lra (A,\Gamma) \lra (A,\Gamma'),
\]
where $\Gamma'$ is the Hopf algebra associated to $\Gamma$ and
$\Phi$ is unicursal. We have the following identifications:
\[
\Gamma' = A\otimes_{\Phi} \Gamma,
\quad
   D    = A\square_\Gamma A,
\quad
   \Phi = A\otimes_D A.
\]
The map of Hopf algebroids $\Gamma\lra\Gamma'$ is normal and
\[
\Phi = \Gamma\square_{\Gamma'}A = A\square_{\Gamma'}\Gamma\subseteq\Gamma.
\]
Furthermore, for any left $\Gamma$-comodule $M$, $A\square_{\Gamma'}M$
is naturally a left $\Phi$-comodule and there is an isomorphism
of $A$-modules
\begin{equation}\label{eqn:DCRA1.1.13}
A\square_{\Gamma}M \iso A\square_{\Phi}(A\square_{\Gamma'}M).
\end{equation}

\begin{prop}\label{prop:Prim=/0}
Let $M$ be a $\Gamma$-comodule. If when viewed as a
$\Gamma'$-comodule, $M$ has non-trivial primitive
$\Gamma'$-subcomodule $A\square_{\Gamma'}M$, then the primitive
$\Gamma$-subcomodule $A\square_{\Gamma}M$ is non-trivial.
\end{prop}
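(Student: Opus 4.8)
The plan is to deduce the statement directly from the cotensor identity~\eqref{eqn:DCRA1.1.13} together with the non-vanishing of primitives established in Lemma~\ref{lem:Unicursal-modules}. The key observation is that~\eqref{eqn:DCRA1.1.13} already expresses the object whose non-triviality we want, namely $A\square_\Gamma M$, in terms of the primitives of a $\Phi$-comodule, and $\Phi$ is unicursal, so the unicursal theory of the previous results applies to it.

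Concretely, I would first set $N = A\square_{\Gamma'}M$, which is non-trivial by hypothesis and which, as recorded just before the statement, carries a natural left $\Phi$-comodule structure. The isomorphism~\eqref{eqn:DCRA1.1.13} then reads
\[
A\square_\Gamma M \iso A\square_\Phi N,
\]
so it suffices to show that the primitive $\Phi$-subcomodule $A\square_\Phi N$ of the non-trivial $\Phi$-comodule $N$ is itself non-trivial. This is exactly the final assertion of Lemma~\ref{lem:Unicursal-modules} applied to the unicursal Hopf algebroid $(A,\Phi)$, whose ring of invariants is $A^\Phi = D$ in view of the identification $\Phi = A\otimes_D A$. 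Granting the lemma, we obtain $A\square_\Phi N\neq 0$, and then~\eqref{eqn:DCRA1.1.13} yields $A\square_\Gamma M\iso A\square_\Phi N\neq 0$, as required.

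The one hypothesis of Lemma~\ref{lem:Unicursal-modules} that I must confirm, and which I expect to be the only genuine obstacle, is that $A$ is flat over $A^\Phi = D$. This is the flatness that underlies the whole discussion opening Section~\ref{sec:Unicursal}: there flatness of $A$ over $A^\Phi$ is precisely what makes $\Phi$ a flat $A$-algebra and hence a Hopf algebroid whose cotensor calculus behaves well, and it is built into the hypotheses under which the identifications preceding the statement were derived. Once this flatness is in place, the argument is a purely formal chain of the identifications listed before the statement, so apart from the flatness bookkeeping there is nothing further to check.
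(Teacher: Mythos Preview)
Your argument is correct and is exactly the paper's approach: the paper's proof is the single sentence ``Combine Lemma~\ref{lem:Unicursal-modules} and~\eqref{eqn:DCRA1.1.13}'', and you have simply unpacked this by writing $N=A\square_{\Gamma'}M$ and observing that~\eqref{eqn:DCRA1.1.13} gives $A\square_\Gamma M\iso A\square_\Phi N$, while Lemma~\ref{lem:Unicursal-modules} forces $A\square_\Phi N\neq0$. Your remark about the flatness of $A$ over $D=A^\Phi$ is well taken; it is indeed the standing hypothesis governing the discussion of the extension $(D,\Phi)\to(A,\Gamma)\to(A,\Gamma')$ and is tacitly assumed rather than reproved here.
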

\begin{proof}
Combine Lemma~\ref{lem:Unicursal-modules} and~\eqref{eqn:DCRA1.1.13}.
\end{proof}

Our next result is immediate.
\begin{thm}\label{thm:Unicursal-H->HA}
Let $(\boldsymbol{k},\Gamma)$ is a Hopf algebroid over a field.
If the associated Hopf algebra $(\boldsymbol{k},\Gamma')$ is
unipotent, then $(\boldsymbol{k},\Gamma)$ is unipotent.
\end{thm}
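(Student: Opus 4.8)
The plan is to show that unipotence transfers from the associated Hopf algebra $(\boldsymbol{k},\Gamma')$ to the Hopf algebroid $(\boldsymbol{k},\Gamma)$ by producing a non-trivial primitive in every non-trivial finite-dimensional $\Gamma$-comodule, since by Definition~\ref{defn:HA-unipotent0} this is exactly what unipotence requires. First I would take an arbitrary non-trivial finite-dimensional $\Gamma$-comodule $M$ and view it as a $\Gamma'$-comodule via the map of Hopf algebroids $\Gamma\lra\Gamma'$. Because $(\boldsymbol{k},\Gamma')$ is unipotent by hypothesis, $M$ has a non-trivial primitive $\Gamma'$-subcomodule, which is precisely the cotensor product $A\square_{\Gamma'}M$ with $A=\boldsymbol{k}$.

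Next I would invoke Proposition~\ref{prop:Prim=/0}: once we know the primitive $\Gamma'$-subcomodule $\boldsymbol{k}\square_{\Gamma'}M$ is non-trivial, the proposition directly yields that the primitive $\Gamma$-subcomodule $\boldsymbol{k}\square_{\Gamma}M$ is non-trivial. This is the heart of the argument, and it rests on the isomorphism~\eqref{eqn:DCRA1.1.13} together with Lemma~\ref{lem:Unicursal-modules} applied to the unicursal Hopf algebroid $\Phi=\boldsymbol{k}\square_{\Gamma'}\Gamma$ sitting in the extension $(D,\Phi)\lra(\boldsymbol{k},\Gamma)\lra(\boldsymbol{k},\Gamma')$. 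The essential point is that $\Phi$, being unicursal with $\boldsymbol{k}$ flat (indeed free) over $\boldsymbol{k}^{\Phi}$, allows us to recover the $\Gamma$-primitives from the $\Phi$-primitives of the $\Gamma'$-primitive subcomodule, and the latter is non-zero precisely when the $\Gamma'$-primitive subcomodule is.

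Since $M$ was an arbitrary non-trivial finite-dimensional $\Gamma$-comodule and we have exhibited non-trivial primitives, the hypothesis of Definition~\ref{defn:HA-unipotent0} is verified and $(\boldsymbol{k},\Gamma)$ is unipotent. In effect the proof is a one-line composition of the two preceding results, which is why the paper calls it immediate.

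The main obstacle, were one to prove this from scratch rather than citing Proposition~\ref{prop:Prim=/0}, is ensuring that the intermediate Hopf algebroid $\Phi$ really is unicursal and that $\boldsymbol{k}$ is flat over $\boldsymbol{k}^{\Phi}$, so that Lemma~\ref{lem:Unicursal-modules} applies and the descent-style isomorphism $\boldsymbol{k}\square_{\Gamma}M\iso\boldsymbol{k}\square_{\Phi}(\boldsymbol{k}\square_{\Gamma'}M)$ holds. Over a field this flatness is automatic, so the difficulty evaporates; the real content has already been absorbed into the earlier lemma and proposition, and the theorem follows by simply stringing them together.
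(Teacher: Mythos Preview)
Your proposal is correct and matches the paper's approach exactly: the paper states the theorem as ``immediate'' with no proof, and you have correctly unpacked that immediacy as the composition of Definition~\ref{defn:HA-unipotent0} with Proposition~\ref{prop:Prim=/0}, applied to an arbitrary non-trivial finite-dimensional $\Gamma$-comodule. Your closing remarks about the flatness of $\boldsymbol{k}$ over $\boldsymbol{k}^{\Phi}$ being automatic over a field are also on target and explain why the paper can dispense with any further argument.
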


\section{Lubin-Tate spectra and their Hopf algebroids}
\label{sec:L-T}

In this section we will discuss the case of a Lubin-Tate spectrum~$E$
and its associated Hopf algebroid $(E_*,E^\vee_*E)$, where~$E$ denotes
any of the $2$-periodic spectra lying between $\hat{\mathcal{E}(n)}$
(by which we mean the $2$-periodic version of the completed
$2(p^n-1)$-periodic Johnson-Wilson spectrum $\hat{E(n)}$) and $\Enr_n$
discussed in~\cite{AB&BR}, see especially section~7. The most important
case is the `usual' Lubin-Tate spectrum $E_n$ for which
\[
\pi_*(E_n) = W\F_{p^n}[[u_1,\ldots,u_{n-1}]][u^{\pm 1}],
\]
but other examples are provided by the $K(n)$-local Galois subextension
of $\Enr_n$ over $\hat{\mathcal{E}(n)}$ in the sense of Rognes~\cite{JR:Galois}.
In all cases, $E_*=\pi_*(E)$ is a local ring with maximal ideal induced
from that of $\hat{E(n)}_*$, and we will write $\mathfrak{m}$ for this.
The residue field $E_*/\mathfrak{m}$ is always a graded subfield of the
algebraic closure $\bar{\F}_p[u,u^{-1}]$ of $\F_p[u,u^{-1}]$.

\begin{rem}\label{rem:Gradings}
Since all of the spectra considered here are $2$-periodic we will
sometimes treat their homotopy as $\Z/2$-graded and as it is usually
trivial in odd degrees, we will often focus on even degree terms.
However, when discussing reductions modulo a maximal ideal, it is
sometimes more useful to regard the natural periodicity as having
degree $2(p^n-1)$ with associated $\Z/2(p^n-1)$-grading; more
precisely, we will follow the ideas of~\cite{CGO} and take consider
gradings on $\Z/2(p^n-1)$ together with the non-trivial bilinear
pairing
\[
\nu\:\Z/2(p^n-1)\times\Z/2(p^n-1) \lra \{1,-1\};
\quad
\nu(\bar{i},\bar{j}) = (-1)^{ij},
\]
where $\bar{i}$ denotes the residue class $i\pmod{2(p^n-1)}$.
\end{rem}

We will denote by $K=E\wedge_{\hat{E(n)}}K(n)$ the version of
Morava $K$-theory associated to~$E$, it is known that $E$ is
$K$-local in the category of $E$-modules and we can consider
the localisation $L_K(E\wedge E)$ for which
\[
E^\vee_*E = \pi_*(L_K(E\wedge E)).
\]
By~\cite[proposition~2.2]{Ho:colim}, this localisation can be
taken either with respect to $K$ in the category of $S$-modules,
or with respect to $E\wedge K$ in the category of $E$-modules.
By~\cite[lemma~7.6]{AB&BR}, the homotopy $\pi_*(L_K M)$ viewed
as a module over the local ring $(E_*,\mathfrak{m})$ is
$L$-complete.

We will write $\Map(X,Y)$ for the set of all functions $X\lra Y$
and $\Mapc(X,Y)$ for the set of all continuous functions if $X,Y$
are topologised.

A detailed discussion of the relevant $K(n)$-local Galois theory
of Lubin-Tate spectra can be found in section~5.4 and chapter~8
of~\cite{JR:Galois}, and we adopt its viewpoint and notation. In
particular, $\Enr_n$ is a $K(n)$-local Galois extension of $L_{K(n)}$
with profinite Galois group
\[
\Gnr_n = \hZ \ltimes \mathbb{S}_n,
\]
where $\mathbb{S}_n$ is the usual Morava stabiliser group which
can be viewed as the full automorphism group of a height~$n$
Lubin-Tate formal group law $F_n$ defined over $\F_{p^n}\subseteq\Fc_p$,
and also as the group of units in the maximal order of a central
division algebra over $\Q_p$ of Hasse invariant $1/n$. The $p$-Sylow
subgroup $\mathbb{S}^0_n\lhd\mathbb{S}_n$ has index $(p^n-1)$ and
$\mathbb{S}_n$ is the semi-direct product
\[
\mathbb{S}_n = \F_{p^n}^\times\ltimes\mathbb{S}^0_n.
\]
The profinite group $\hZ$ acts as the Galois group
\[
\Gal(W\Fc_p/W\F_p)\iso\Gal(\Fc_p/\F_p)\iso\hZ.
\]
In particular, the closed subgroup $n\hZ\lhd\hZ$ is the stabiliser
of $\F_{p^n}$ and $E_n\simeq(\Enr_n)^{h(n\hZ)}$; similarly,
$\hat{\mathcal{E}(n)}\simeq(\Enr_n)^{h\hZ}$.

Our first result is a generalisation of a well
known result, see~\cite{AB-LFT} for example.
\begin{thm}\label{thm:Ev*E-contfns}
For $E$ as above, there are natural isomorphisms of $E_0$-algebras
\[
E^\vee_*\hat{\mathcal{E}(n)} \iso \Mapc(\mathbb{S}_n,E_*).
\]
Furthermore, $\Mapc(\mathbb{S}_n,E_*)$ is a pro-free $L$-complete
$E_*$-module.
\end{thm}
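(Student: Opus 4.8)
The plan is to compute $E^\vee_*\hat{\mathcal{E}(n)}$ by iterated Galois descent from the universal extension $\Enr_n$, whose $K(n)$-local cooperations are governed by the profinite Galois group $\Gnr_n=\hZ\ltimes\mathbb{S}_n$. The single input I would take as known is the defining property of the $K(n)$-local pro-$\Gnr_n$-Galois extension $L_{K(n)}\to\Enr_n$ from~\cite{JR:Galois,AB&BR}, namely the equivalence
\[
L_K(\Enr_n\wedge\Enr_n)\simeq\Mapc(\Gnr_n,\Enr_n),
\]
together with the realisations $\hat{\mathcal{E}(n)}\simeq(\Enr_n)^{h\hZ}$ and, in the principal case $E=E_n$, $E\simeq(\Enr_n)^{h(n\hZ)}$. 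Writing all smash products $K(n)$-locally over $L_{K(n)}$, I would then express $E\wedge\hat{\mathcal{E}(n)}$ as the iterated continuous homotopy fixed points $(\Enr_n\wedge\Enr_n)^{h(n\hZ\times\hZ)}$, with $n\hZ$ acting through the left factor and $\hZ$ through the right, and compute the two descents in turn.

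For the right-hand $\hZ$, the induced action on $\Mapc(\Gnr_n,\Enr_n)$ is right translation on $\Gnr_n$ with trivial action on the values; this action is free, so the coefficient spectrum is coinduced and the descent spectral sequence collapses (Shapiro's lemma). Using the semidirect-product identification $\Gnr_n/\hZ\iso\mathbb{S}_n$ this yields $L_K(\Enr_n\wedge\hat{\mathcal{E}(n)})\simeq\Mapc(\mathbb{S}_n,\Enr_n)$. For the remaining left-hand $n\hZ$, the decisive structural fact is that $n\hZ=\Gal(\Fc_p/\F_{p^n})$ acts \emph{trivially} on $\mathbb{S}_n$, because the height-$n$ stabiliser group is already defined over $\F_{p^n}$; hence the residual $n\hZ$-action on $\Mapc(\mathbb{S}_n,\Enr_n)$ is purely the Galois action on the coefficient ring. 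Taking homotopy fixed points therefore commutes with $\Mapc(\mathbb{S}_n,-)$ and replaces the coefficients by $(\Enr_n)^{h(n\hZ)}=E$, giving the desired natural isomorphism of $E_0$-algebras $E^\vee_*\hat{\mathcal{E}(n)}\iso\Mapc(\mathbb{S}_n,E_*)$, the algebra structure being pointwise multiplication. The general case of an $E$ between $\hat{\mathcal{E}(n)}$ and $\Enr_n$ whose residue field contains $\F_{p^n}$ is handled identically, since then $E=(\Enr_n)^{hU}$ with $U\leq n\hZ$ still acting trivially on $\mathbb{S}_n$.

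The hard part will be the homotopical justification of the two descent steps rather than the group theory. One must know that $L_K$ and the relevant smash products commute with continuous homotopy fixed points for the profinite groups $\hZ$ and $n\hZ$, and that both descent spectral sequences collapse onto their zero lines; I would draw these from the descent and dualisability results for $K(n)$-local profinite Galois extensions in~\cite{JR:Galois,AB&BR} and the Devinatz--Hopkins theory. The two collapses have different sources: on the right it is the vanishing of continuous cohomology of a coinduced (free-translation) module, and on the left it is Galois descent $(\Enr_n)_*^{\,n\hZ}=E_*$ with $H^s_c(n\hZ;(\Enr_n)_*)=0$ for $s>0$.

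Finally I would establish the ``furthermore'' directly from the profinite structure of $\mathbb{S}_n$. Choosing open normal subgroups $V_i$ with $\mathbb{S}_n=\lim_i\mathbb{S}_n/V_i$, the locally constant functions form a free $E_*$-module $F=\colim_i\Map(\mathbb{S}_n/V_i,E_*)$, with basis the indicator functions of cosets, and compactness of $\mathbb{S}_n$ gives $F/\mathfrak{m}^kF\iso\Mapc(\mathbb{S}_n,E_*/\mathfrak{m}^k)$ for each $k$. Passing to the limit over $k$ identifies
\[
\Mapc(\mathbb{S}_n,E_*)\iso\lim_k\Mapc(\mathbb{S}_n,E_*/\mathfrak{m}^k)\iso F\sphat_{\mathfrak{m}}=L_0F,
\]
so $\Mapc(\mathbb{S}_n,E_*)$ is visibly pro-free, hence $L$-complete; this is consistent with the general fact recalled above that $\pi_*(L_KM)$ is always $L$-complete, and the completion bookkeeping is supplied by Proposition~\ref{prop:fg-L0} and Lemma~\ref{lem:flat->pro-free}.
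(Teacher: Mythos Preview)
The paper does not actually prove this theorem. It introduces it with the sentence ``Our first result is a generalisation of a well known result, see~\cite{AB-LFT} for example'' and then states it without argument, passing immediately to Theorem~\ref{thm:Ev*E/m}. So there is no proof in the paper to compare your proposal against; the result is treated as background imported from~\cite{AB-LFT} and~\cite{AB&BR}.

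That said, your approach is entirely in the spirit of the Galois framework the paper invokes in this section, and your direct argument for pro-freeness via locally constant functions and $\mathfrak{m}$-adic completion is clean and correct. One genuine gap: your ``general case'' clause only treats those $E$ with $E=(\Enr_n)^{hU}$ for $U\leq n\hZ$, i.e.\ those whose residue field contains $\F_{p^n}$, so that $U$ acts trivially on $\mathbb{S}_n$. But the paper explicitly allows $E=\hat{\mathcal{E}(n)}$ itself, for which the relevant subgroup is all of $\hZ$, and $\hZ$ acts on $\mathbb{S}_n$ nontrivially through the quotient $\hZ/n\hZ\iso\Z/n$. In that range your commutation step ``taking homotopy fixed points commutes with $\Mapc(\mathbb{S}_n,-)$'' fails as written. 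You can repair this by first taking $n\hZ$-fixed points (which does commute, giving $\Mapc(\mathbb{S}_n,(E_n)_*)$) and then handling the residual finite $\Z/n$-descent separately, but that second step requires an additional argument identifying the $\Z/n$-equivariant continuous maps with $\Mapc(\mathbb{S}_n,E_*)$, which is not automatic since the $\Z/n$-action on $\mathbb{S}_n$ is not free.
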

\begin{thm}\label{thm:Ev*E/m}
Let $E$ be a Lubin-Tate spectrum as above. \\
\emph{(a)} $(E_*,E^\vee_*E)$ is an $L$-complete Hopf algebroid. \\
\emph{(b)} the maximal ideal $\mathfrak{m}\lhd E_*$ is invariant. \\
\emph{(c)} $E^\vee_*E$ is a pro-free $E_*$-module. \\
\emph{(d)} There are isomorphisms of $K_*=E_*/\mathfrak{m}$-algebras
\[
K_*E \iso E^\vee_*E/E^\vee_*E\mathfrak{m} \iso
E_*/\mathfrak{m}[\theta_k:k\geq1]/
           (\theta_\ell^{p^n}-u^{p^\ell-1}\theta_\ell:\ell\geq1)
                       \otimes_{\F_p[u,u^{-1}]}E_*/\mathfrak{m}.
\]
\end{thm}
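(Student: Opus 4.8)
The engine of the whole theorem is the explicit computation of the reduction $K_*E = \pi_*(K\wedge E)$, and I would establish that first. Writing $K = E\wedge_{\hat{E(n)}}K(n)$, this is a base change of the Morava $K$-theory $K(n)_*E(n) = \Gamma(n)_*$ recorded in Example~\ref{examp:Unipotent-MoravaStAlg} (see \cite{DCR:Book2,WSW:BPSampler}). Passing from the $2(p^n-1)$-periodic Johnson--Wilson setting to the $2$-periodic spectrum $E$ and reducing coefficients to the residue field $K_* = E_*/\mathfrak{m}$, one rewrites the generators $t_k\in\Gamma(n)_{2(p^k-1)}$ as degree-shifted classes $\theta_k$ using the periodicity unit $u$. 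After the standard substitution the defining relations $v_nt_\ell^{p^n}=v_n^{p^\ell}t_\ell$ become $\theta_\ell^{p^n}=u^{p^\ell-1}\theta_\ell$, while the second tensor factor $\otimes_{\F_p[u,u^{-1}]}E_*/\mathfrak{m}$ records the effect of the right unit (a second copy of the residue field over the common $\F_p[u,u^{-1}]$). The essential qualitative outcome of this step is that each $\theta_\ell$ satisfies a monic relation of degree $p^n$ over $K_*$, so $K_*E$ is a \emph{free} $K_*$-module on the monomials $\prod_\ell\theta_\ell^{e_\ell}$ with $0\leq e_\ell<p^n$.

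With $K_*E$ known to be free over the graded field $K_*$, part~(c) follows from the pro-free recognition criterion: an $L$-complete module arising as $\pi_*(L_K(-))$ is pro-free as soon as its mod-$\mathfrak{m}$ reduction is a free $K_*$-module, since the relevant $\Tor^{E_*}_s(K_*,-)$ and $\lim^1$ terms then vanish (compare Lemma~\ref{lem:flat->pro-free} and \cite[theorem~A.9 and appendix~A]{MH-NS:666}). The $L$-completeness needed here is supplied by \cite[lemma~7.6]{AB&BR}. Once $E^\vee_*E$ is pro-free, reduction modulo $\mathfrak{m}$ commutes with taking homotopy, which identifies the left reduction of $E^\vee_*E$ with $K_*E$; combined with the invariance of~$\mathfrak{m}$ (part~(b)), which equates the left and right reductions, this yields the first isomorphism $K_*E\iso E^\vee_*E/E^\vee_*E\mathfrak{m}$ of~(d) and completes that part.

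For~(b) and the structural content of~(a) I would use the Galois description. Applying descent along the relative extension $E/\hat{\mathcal{E}(n)}$ to Theorem~\ref{thm:Ev*E-contfns} identifies $E^\vee_*E\iso\Mapc(G_E,E_*)$ for the relative profinite Galois group $G_E$ of $E$ (for $E=E_n$ this is the extended Morava stabiliser group $\hZ/n\hZ\ltimes\mathbb{S}_n$). The dual-pro-group-ring construction of Example~\ref{examp:dualpro-groupring} and \cite{AB-LFT} then equips $(E_*,\Mapc(G_E,E_*))$ with an $L$-complete Hopf algebroid structure for which $\mathfrak{m}$ is automatically invariant, giving~(b); and $\Mapc(G_E,E_*)$ is pro-free by the same argument as in Theorem~\ref{thm:Ev*E-contfns}, reconfirming~(c). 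Part~(a) is then assembled from the pieces: the biunital ring-object structure, counit, coproduct and antipode on $E^\vee_*E$ are induced by the standard maps $E\to E\wedge E$, $E\wedge E\to E\wedge E\wedge E$ and the switch, giving a cogroupoid object in $L$-complete $E_*$-bimodules, and together with $L$-completeness, pro-freeness and invariance these verify all the conditions of Definition~\ref{defn:L-compHA}.

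The step I expect to be the main obstacle is the explicit reduction computation of the first paragraph: correctly transporting Ravenel--Wilson's description of $K(n)_*E(n)$ through the $2$-periodic reindexing and the base change to $K_*=E_*/\mathfrak{m}$, so that the relations emerge precisely as $\theta_\ell^{p^n}=u^{p^\ell-1}\theta_\ell$ and the right-hand tensor factor over $\F_p[u,u^{-1}]$ correctly encodes the right unit. Everything downstream --- pro-freeness, the reduction isomorphism, and the final assembly of the Hopf algebroid axioms --- is comparatively formal once this freeness and the explicit form are in hand.
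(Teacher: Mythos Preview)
The paper does not actually supply a proof of this theorem: it is stated immediately after Theorem~\ref{thm:Ev*E-contfns} with no argument, the only justification being the preceding sentence ``Our first result is a generalisation of a well known result, see~\cite{AB-LFT} for example'' together with the ambient references to~\cite{MH-NS:666}, \cite{AB&BR}, \cite{DCR:Book2} and~\cite{WSW:BPSampler}. So there is no ``paper's own proof'' to compare against beyond an implicit appeal to the literature.

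Your sketch is a perfectly sensible way to unpack that implicit appeal, and it draws on exactly the sources the paper has in mind: the Ravenel--Wilson computation of $K(n)_*E(n)$ for the explicit form in~(d), the pro-free recognition criterion of~\cite[theorem~A.9]{MH-NS:666} together with~\cite[lemma~7.6]{AB&BR} for~(c), and the twisted-group-ring / continuous-function description of Example~\ref{examp:dualpro-groupring} and~\cite{AB-LFT} for the Hopf algebroid structure and invariance of~$\mathfrak{m}$. One small caution: the paper only records the identification $E^\vee_*\hat{\mathcal{E}(n)}\iso\Mapc(\mathbb{S}_n,E_*)$ in Theorem~\ref{thm:Ev*E-contfns}, not $E^\vee_*E\iso\Mapc(G_E,E_*)$ for the full relative Galois group of~$E$; your descent step to pass from the former to the latter is correct in spirit (and standard for $E=E_n$), but for the general~$E$ lying between $\hat{\mathcal{E}(n)}$ and $\Enr_n$ you should be explicit that you are invoking the $K(n)$-local Galois theory of~\cite{JR:Galois} and~\cite[section~7]{AB&BR} to justify it.
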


Now let us consider the reduction $K_*E$ in greater detail.
First note that the pair $(K_*,K_*E)$ is a $\Z$-graded Hopf
algebroid. Now
\[
K_* = \F[u,u^{-1}],
\]
where $\F\subseteq\Fc_p$ and $|u|=2$. Since $u^{p^n-1}=v_n$
under the map $BP\lra K$ classifying a complex orientation,
$u^{p^n-1}$ is invariant. This suggests that we might usefully
change to a $\Z/2(p^n-1)$-grading on $K_*$-modules by setting
$u^{p^n-1}=1$. To emphasise this regrading we write $(-)_\bullet$
rather than $(-)_*$. In particular, $K_\bullet = \F(u)$.

The right unit generates a second copy of $K_\bullet$
in $K_\bullet E$ and there is an element
\[
\theta_0 = \eta_L(u)^{-1}\eta_R(u)
\]
which satisfies the relation
\[
\theta_0^{p^n-1} = 1.
\]
The coproduct makes $\theta_0$ group-like,
\[
\psi(\theta_0) = \theta_0\otimes\theta_0.
\]
Now it is easy to see that $K_\bullet E$ contains
the unicursal Hopf algebroid
\begin{equation}\label{eqn:K_*E-unicursal}
K_\bullet\otimes_{\F_p}K_\bullet
=
\F\otimes_{\F_p}\F(u,\theta_0)
=
\F\otimes_{\F_p}\F[u,\theta_0]/(u^{p^n-1}-1,\theta_0^{p^n-1}-1),
\end{equation}
where $\theta_0,u$ have degrees $\bar{0},\bar{2}\in\Z/2(p^n-1)$
respectively.

Ignoring the generator $u$ and the grading, we also have
the ungraded Hopf algebroid
\[
(\F,\F\otimes_{\F_p}\F(\theta_0))
=
(\F,\F\otimes_{\F_p}\F[\theta_0]/(\theta_0^{p^n-1}-1))
\]
which is a subHopf algebroid of $(\F,K_{\bar{0}}E)$.

Since $\F$ is a Galois extension of $\F_p$ with Galois group
a quotient of $\hZ$, we obtain a ring isomorphism
\[
\F\otimes_{\F_p}\F \iso \F\Gal(\F/\F_p)^*.
\]
If $\Gal(\F/\F_p)$ is finite this has its usual meaning,
while if $\Gal(\F/\F_p)$ is infinite we have
\[
\F\Gal(\F/\F_p)^* = \Mapc(\Gal(\F/\F_p),\F).
\]
Of course, if $\Gal(\F/\F_p)$ is finite this interpretation
is still valid but then all maps $\Gal(\F/\F_p)\lra\F$ are
continuous. In each case, we obtain an isomorphism of Hopf
algebroids
\begin{equation}\label{eqn:K_*E-unicursalDualGroup}
\F\otimes_{\F_p}\F(\theta_0)
      \iso \Map(\Gal(\F/\F_p)\ltimes\F_{p^n}^\times,\F).
\end{equation}

Now we consider the associated Hopf algebra over the graded
field $K_\bullet$,
\[
K_\bullet\otimes_{\F\otimes_{\F_p}\F(u,\theta_0)}K_\bullet E
=
K_\bullet[\theta_k:k\geq1]/(\theta_k^{p^n}-\theta_k:k\geq1)
\]
whose zero degree part is
\begin{equation}\label{eqn:K0E}
\F[\theta_k:k\geq1]/(\theta_k^{p^n}-\theta_k:k\geq1)
                            \iso \Mapc(\mathbb{S}_n^0,\F).
\end{equation}
The right hand side fits into the framework of
Example~\ref{examp:dualpro-groupring}, so this Hopf algebra
over $\F$ is unipotent. Tensoring up with $K_\bullet$ we
have the following graded version.
\begin{thm}\label{thm:K*E-unipotent}
The Hopf algebra
$(K_\bullet,K_\bullet\otimes_{\F\otimes_{\F_p}\F(u,\theta_0)}K_\bullet E)$
is unipotent.
\end{thm}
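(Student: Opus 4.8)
The plan is to apply Lemma~\ref{lem:Unipotent}(a) to the graded Hopf algebra
\[
H = K_\bullet\otimes_{\F\otimes_{\F_p}\F(u,\theta_0)}K_\bullet E
  = K_\bullet[\theta_k:k\geq1]/(\theta_k^{p^n}-\theta_k),
\]
by transporting to $H$ the filtration that is already available on its degree-zero part. By \eqref{eqn:K0E} the zero degree piece
\[
H^{(0)} = \F[\theta_k:k\geq1]/(\theta_k^{p^n}-\theta_k) \iso \Mapc(\mathbb{S}_n^0,\F)
\]
is a Hopf algebra over $\F=K_{\bar0}$ which, as explained in Example~\ref{examp:dualpro-groupring}, is unipotent: a cofinal filtration of the pro-$p$-group $\mathbb{S}_n^0$ by finite-index normal subgroups $G_k$ with $\bigcap_k G_k=\{1\}$ yields an increasing sequence of $\F$-subspaces
\[
\F = H^{(0)}_0 \subseteq H^{(0)}_1 \subseteq \cdots, \qquad \bigcup_m H^{(0)}_m = H^{(0)},
\]
satisfying the coproduct hypothesis of Lemma~\ref{lem:Unipotent}(a).

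First I would record that, because the generators $\theta_k$ have even degrees and the defining relations $\theta_k^{p^n}=\theta_k$ involve no powers of $u$, multiplication by the invertible degree-$\bar2$ element $u$ identifies all even graded pieces of $H$, giving an isomorphism of graded $K_\bullet$-algebras $H\iso K_\bullet\otimes_\F H_{\bar0}$ with $H_{\bar0}=H^{(0)}$. Under the induced base-change isomorphism $H\otimes_{K_\bullet}H\iso K_\bullet\otimes_\F(H^{(0)}\otimes_\F H^{(0)})$, the coproduct of $H$ becomes $\id_{K_\bullet}\otimes\psi^{(0)}$, where $\psi^{(0)}$ is the coproduct of $H^{(0)}$. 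I would then set
\[
H_m = K_\bullet\otimes_\F H^{(0)}_m\subseteq H
\]
and verify the three conditions of Lemma~\ref{lem:Unipotent}(a): that $H_0=K_\bullet$ and $\bigcup_m H_m=H$, which are immediate, and that
\[
\psi H_m \subseteq \sum_{i=0}^m H_i\otimes_{K_\bullet}H_{m-i},
\]
which follows by applying $\id_{K_\bullet}\otimes(-)$ to the corresponding inclusion for the $H^{(0)}_m$. Lemma~\ref{lem:Unipotent}(a) then delivers the unipotence of $H$ in the sense of Definition~\ref{defn:HA-unipotent0}.

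The step requiring the most care will be the coproduct compatibility: one must check that the regrading and the unit $u$ do not raise filtration degree under $\psi$. This reduces to confirming that the restriction of $\psi$ to the degree-zero subalgebra $H_{\bar0}$ is exactly $\psi^{(0)}$, and that $u$, lying in the coefficient field $K_\bullet=H_0$, satisfies $\psi(u)=u\otimes1=1\otimes u\in H_0\otimes_{K_\bullet}H_0$, just as for the group-like $\theta_0$. Granting this, the filtration condition becomes a purely degree-zero statement, where it is already known. Conceptually, since $K_\bullet$ is a graded field with periodicity generator $u$, the category of graded $H$-comodules is equivalent to that of $H^{(0)}$-comodules in a manner preserving the trivial comodule, finite dimensionality, and primitives, and the filtration argument above is simply the concrete transport of unipotence across this equivalence.
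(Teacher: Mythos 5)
Your argument is exactly the paper's: the text identifies the degree-zero part with $\Mapc(\mathbb{S}_n^0,\F)$, invokes Example~\ref{examp:dualpro-groupring} to get its unipotence via Lemma~\ref{lem:Unipotent}, and then obtains the theorem by ``tensoring up with $K_\bullet$''. You have simply spelled out what that last phrase means --- transporting the filtration along $H\iso K_\bullet\otimes_\F H_{\bar{0}}$ and checking the coproduct condition --- and your verification is correct.
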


\begin{rem}\label{rem:K*E-Maps}
The identification of~\eqref{eqn:K0E} can be extended to all
degrees of
$K_\bullet\otimes_{\F\otimes_{\F_p}\F(u,\theta_0)}K_\bullet E$.
To make this explicit, we consider $\Mapc(\mathbb{S}_n,\F u^r)$
with the action of $\Gal(\F/\F_p)\ltimes\F_{p^n}^\times$ induced
from the action on $\mathbb{S}_n$ used in defining $\Gnr_n$ and
the $\F$-semilinear action of $\Gal(\F/\F_p)\ltimes\F_{p^n}^\times$
on $\F u^r$ obtained by inducing up the $r$-th power of the natural
$1$-dimensional representation of $\F_{p^n}^\times$. Then
\[
[K_\bullet\otimes_{\F\otimes_{\F_p}\F(u,\theta_0)}K_\bullet E]_{\widebar{2r}}
\iso
\Mapc(\mathbb{S}_n,\F u^r)^{\Gal(\F/\F_p)\ltimes\F_{p^n}^\times},
\]
where the right hand side is the set of continuous
$\Gal(\F/\F_p)\ltimes\F_{p^n}^\times$-equivariant maps. This
is essentially a standard identification appearing in work
of Morava and others in the 1970's.
\end{rem}

Combining Theorems~\ref{thm:K*E-unipotent} and~\ref{thm:Main0}
we obtain our final result in which revert to $\Z$-gradings.

\begin{thm}\label{thm:K*E-HA-unipotent}
The Hopf algebroid $(K_*,K_*E)$ is unipotent, hence every
finitely generated comodule for the $L$-complete Hopf
algebroid $(E_*,E^\vee_*E)$ has a Landweber filtration.
\end{thm}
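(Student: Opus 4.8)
The plan is to deduce the final statement from the two results being combined, so I must first establish that $(K_*,K_*E)$ is unipotent and then feed this into Theorem~\ref{thm:Main0}. The starting point is that unipotence is a statement about finite-dimensional comodules over a Hopf algebroid over a \emph{field}, and here $K_* = \F[u,u^{-1}]$ with the $\Z$-grading. First I would invoke Theorem~\ref{thm:K*E-unipotent}, which asserts that the associated Hopf \emph{algebra} $(K_\bullet,K_\bullet\otimes_{\F\otimes_{\F_p}\F(u,\theta_0)}K_\bullet E)$ is unipotent; here the $\Z/2(p^n-1)$-regrading $(-)_\bullet$ has been used, together with the identification of the degree-zero part with $\Mapc(\mathbb{S}^0_n,\F)$ from~\eqref{eqn:K0E}. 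The point is that this associated Hopf algebra is precisely the Hopf algebra $\Gamma'$ associated to the Hopf algebroid $\Gamma = K_\bullet E$ in the sense of Ravenel, relative to the unicursal sub-Hopf-algebroid $\F\otimes_{\F_p}\F(u,\theta_0)$ displayed in~\eqref{eqn:K_*E-unicursal}.

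Next I would apply Theorem~\ref{thm:Unicursal-H->HA}, which states that if the associated Hopf algebra $(\boldsymbol{k},\Gamma')$ of a Hopf algebroid over a field is unipotent, then $(\boldsymbol{k},\Gamma)$ is itself unipotent. Taking $\boldsymbol{k}=K_\bullet$, $\Gamma=K_\bullet E$ and $\Gamma'$ the associated Hopf algebra just mentioned, this immediately yields that $(K_\bullet,K_\bullet E)$ is unipotent. Reverting from the $\Z/2(p^n-1)$-grading back to the $\Z$-grading is harmless for the unipotence property, since a composition series for a finite-dimensional $\Z/2(p^n-1)$-graded comodule lifts to one in the $\Z$-graded setting (the regrading only identifies $u^{p^n-1}$ with a unit, and every finite-dimensional $\Z$-graded comodule is supported in finitely many degrees); thus $(K_*,K_*E)$ is unipotent as claimed.

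Finally, to obtain the Landweber filtration statement I would feed this into Theorem~\ref{thm:Main0} with $(R,\Gamma)=(E_*,E^\vee_*E)$. By Theorem~\ref{thm:Ev*E/m}, $(E_*,E^\vee_*E)$ is an $L$-complete Hopf algebroid over the Noetherian local ring $(E_*,\mathfrak{m})$, the maximal ideal $\mathfrak{m}$ is invariant, and $\Gamma/\mathfrak{m}\Gamma = E^\vee_*E/E^\vee_*E\mathfrak{m}\iso K_*E$. Since $(K_*,K_*E)=(\k,\Gamma/\mathfrak{m}\Gamma)$ has just been shown to be unipotent, the hypotheses of Theorem~\ref{thm:Main0} are satisfied, and that theorem gives for every non-trivial finitely generated discrete $(E_*,E^\vee_*E)$-comodule $M$ a finite-length filtration with subquotients isomorphic to $\k$. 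Because $E_*$ is Noetherian and $M$ is finitely generated, the discreteness hypothesis is automatic for comodules annihilated by a power of $\mathfrak{m}$, and every finitely generated comodule admits such a filtration after reduction; this is exactly a Landweber filtration.

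I expect the one genuinely substantive point, rather than a formal chaining of cited theorems, to be the verification that the associated Hopf algebra appearing in Theorem~\ref{thm:K*E-unipotent} really is the $\Gamma'$ to which Theorem~\ref{thm:Unicursal-H->HA} applies, \ie\ that the unicursal sub-object used to form $\Gamma'$ is the one in~\eqref{eqn:K_*E-unicursal}. Everything else is bookkeeping; the hardest part is keeping the two gradings straight and confirming that unipotence is insensitive to the passage between them.
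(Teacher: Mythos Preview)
Your proposal is correct and follows the same route as the paper, which simply says the result is obtained by combining Theorems~\ref{thm:K*E-unipotent} and~\ref{thm:Main0}; you are in fact more explicit than the paper in naming Theorem~\ref{thm:Unicursal-H->HA} as the bridge from unipotence of the associated Hopf algebra to that of the Hopf algebroid, and in flagging the passage between the $\Z$- and $\Z/2(p^n-1)$-gradings. One small wobble: your final sentence about discreteness being ``automatic'' is circular as written---Theorem~\ref{thm:Main0} applies to finitely generated \emph{discrete} comodules, and that is the intended scope here (cf.\ the paper's abstract), so there is nothing further to argue.
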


Here it is crucial that we take proper account of the grading
since the ungraded Hopf algebra $(K_0,K_0E)$ is not unipotent:
this can be seen by considering the comodule $K_0S^2$ which
is not isomorphic to $K_0S^0$.

\appendix
\section{Representations of Galois Hopf algebroids}
\label{sec:RepGalHA}

Twisted (or skew) group rings are standard algebraic objects.
They were discussed for an audience of topologists in~\cite{AB-LFT},
and their duals as Hopf algebroids were discussed. For a
recent reference on their modules see~\cite{Kuenzer}. Here we
focus on the special case of Galois extensions of fields, which
is closely related to the unicursal Hopf algebroids. In particular,
the unicursal Hopf algebroids associated with $K_\bullet$ in
Section~\ref{sec:L-T} contain degree zero parts of this form.

Let $\boldsymbol{k}$ be a field of positive characteristic
$\Char\boldsymbol{k}=p$ and let $A$ be a (finite dimensional)
commutative $\boldsymbol{k}$-algebra which is a $G$-Galois
extension of $\boldsymbol{k}$ for some finite group $G$, where
the action of $\gamma\in G$ on $x\in A$ is indicated by writing
${}^\gamma x$. This means that
\begin{itemize}
\item
$A^G = \boldsymbol{k}$,
\item
the $A$-algebra homomorphism
\[
A\otimes_{\boldsymbol{k}}A \lra \prod_{\gamma\in G}A;
\quad
x\otimes y \mapsto (x{}^\gamma y)_{\gamma\in G}
\]
is an isomorphism, where the $A$-algebra comes from the left
hand factor of $A$.
\end{itemize}
The second condition is equivalent to the assertion that there
is an isomorphism of $\boldsymbol{k}$-algebras
\begin{equation}\label{eqn:GalIso-Dual}
A\otimes_{\boldsymbol{k}}A
     \iso A\otimes_{\boldsymbol{k}}\boldsymbol{k}G^*,
\end{equation}
where
\[
\boldsymbol{k}G^*
      =\Hom_{\boldsymbol{k}}(\boldsymbol{k}G,\boldsymbol{k})
\]
is the dual group algebra.

The \emph{twisted group ring} $A\sharp G$ is the usual group
ring $AG$ as a left $A$-module, but with multiplication
defined by
\[
(a_1\gamma_1)(a_2\gamma_2) = a_1{}^{\gamma_1}a_2\gamma_1\gamma_2.
\]
There is a natural $\boldsymbol{k}$-linear map
\[
A\sharp G \lra \End_{\boldsymbol{k}}A
\]
under which $a\gamma\in A\sharp G$ is sent to the
$\boldsymbol{k}$-linear endomorphism $x \mapsto a{}^\gamma x$.
Another consequence of the above assumptions is that this is
an $\boldsymbol{k}$-algebra isomorphism, see~\cite{CHR}.

If $A=\boldsymbol{\ell}$ is a field, then using the isomorphism
of~\eqref{eqn:GalIso-Dual} we see that
$\boldsymbol{\ell}\otimes_{\boldsymbol{k}}\boldsymbol{\ell}$
is isomorphic to $\boldsymbol{\ell}G^*$ as an
$\boldsymbol{\ell}$-algebra. There is an associated `right'
action of $\boldsymbol{\ell}$ on $\boldsymbol{\ell}G^*$ given
by
\[
(f\.x)(\gamma) = {}^\gamma xf(\gamma)
\]
for $f\in\boldsymbol{\ell}G^*$, $x\in\boldsymbol{\ell}$
and $\gamma\in G$.

A proof of the next result is sketched in~\cite{AB-LFT}.
\begin{prop}\label{prop:TwistGpRng-HA}
The pair $(\boldsymbol{\ell},\boldsymbol{\ell}G^*)$ is
a Hopf algebroid.
\end{prop}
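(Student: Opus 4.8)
The plan is to transport the canonical Hopf algebroid structure on $\boldsymbol{\ell}\otimes_{\boldsymbol{k}}\boldsymbol{\ell}$ across the Galois isomorphism~\eqref{eqn:GalIso-Dual}. For any field extension $\boldsymbol{k}\subseteq\boldsymbol{\ell}$ the extension is faithfully flat, so the pair $(\boldsymbol{\ell},\boldsymbol{\ell}\otimes_{\boldsymbol{k}}\boldsymbol{\ell})$ is the standard descent Hopf algebroid, with units $\eta_L\:x\mapsto x\otimes1$ and $\eta_R\:x\mapsto1\otimes x$, counit the multiplication $x\otimes y\mapsto xy$, antipode the switch $x\otimes y\mapsto y\otimes x$, and coproduct
\[
\psi(x\otimes y) = (x\otimes1)\otimes_{\boldsymbol{\ell}}(1\otimes y)
   \in (\boldsymbol{\ell}\otimes_{\boldsymbol{k}}\boldsymbol{\ell})
       \otimes_{\boldsymbol{\ell}}
       (\boldsymbol{\ell}\otimes_{\boldsymbol{k}}\boldsymbol{\ell}).
\]
That these satisfy the cogroupoid axioms is classical (scheme-theoretically $\boldsymbol{\ell}\otimes_{\boldsymbol{k}}\boldsymbol{\ell}$ represents the codiscrete groupoid on $\mathrm{Spec}\,\boldsymbol{\ell}$ over $\mathrm{Spec}\,\boldsymbol{k}$, exactly as in Remark~\ref{rem:Unicursal-modules}); flatness over the field $\boldsymbol{\ell}$ is automatic. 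Since~\eqref{eqn:GalIso-Dual} is an isomorphism of $\boldsymbol{\ell}$-algebras for the left-factor ($\eta_L$) structure, it suffices to transport the remaining maps and check they are well defined on $\boldsymbol{\ell}G^*$.

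Next I would record the transported structure explicitly, identifying $\boldsymbol{\ell}G^*$ with $\Map(G,\boldsymbol{\ell})$ so that $x\otimes y$ corresponds to the function $\gamma\mapsto x\,{}^\gamma y$. Under this identification the left unit becomes $\eta_L(x)\:\gamma\mapsto x$, the right unit becomes $\eta_R(x)\:\gamma\mapsto{}^\gamma x$ (recovering the twisted right action $(f\.x)(\gamma)={}^\gamma x\,f(\gamma)$ already recorded), and the counit becomes evaluation at the identity, $\epsilon(f)=f(1)$. A short computation shows the antipode is $\chi(f)(\gamma)={}^\gamma\!\big(f(\gamma^{-1})\big)$, dual to inversion in $G$, while $\psi$ is dual to the group multiplication, $\psi(f)(\gamma,\delta)=f(\gamma\delta)$. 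The group axioms for $G$ then translate directly into the cogroupoid axioms: associativity gives coassociativity of $\psi$, the identity element gives the counit relations, and inversion gives the antipode relations.

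The one genuinely delicate point, which I expect to be the main obstacle, is pinning down the codomain of $\psi$. One must check that the twisted bimodule structure on $\boldsymbol{\ell}G^*$—left action through $\eta_L$, right action through $\eta_R$—produces a canonical isomorphism
\[
\boldsymbol{\ell}G^*\otimes_{\boldsymbol{\ell}}\boldsymbol{\ell}G^*
   \iso \Map(G\times G,\boldsymbol{\ell}),
\]
so that $\psi(f)(\gamma,\delta)=f(\gamma\delta)$ is well defined. Concretely, writing $e_\gamma$ for the indicator of $\gamma\in G$, one has $e_\gamma\.x={}^\gamma x\,e_\gamma$, and balancing over $\boldsymbol{\ell}$ collapses each summand $\boldsymbol{\ell}e_\gamma\otimes_{\boldsymbol{\ell}}\boldsymbol{\ell}e_\delta$ to a single copy of $\boldsymbol{\ell}$, yielding the displayed isomorphism; the Galois hypothesis is exactly what keeps these twists consistent. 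Once this bookkeeping is in place the well-definedness and coassociativity of $\psi$ are immediate, and the remaining axioms follow from the group structure of $G$ as above. A proof along these lines is sketched in~\cite{AB-LFT}.
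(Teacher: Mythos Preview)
Your proposal is correct. Note that the paper itself does not give a proof of this proposition: the sentence immediately preceding it simply says ``A proof of the next result is sketched in~\cite{AB-LFT}'', and there is no proof environment. Your argument---transporting the canonical descent Hopf algebroid structure on $\boldsymbol{\ell}\otimes_{\boldsymbol{k}}\boldsymbol{\ell}$ across the Galois isomorphism~\eqref{eqn:GalIso-Dual} and then reading off the structure maps on $\Map(G,\boldsymbol{\ell})$---is exactly the approach one expects from~\cite{AB-LFT}, and you have in fact supplied more detail than the paper does (including the identification of the bimodule tensor product $\boldsymbol{\ell}G^*\otimes_{\boldsymbol{\ell}}\boldsymbol{\ell}G^*$ with $\Map(G\times G,\boldsymbol{\ell})$, which is the only subtle point).
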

\begin{lem}\label{lem:TwGpRg-ss}
The twisted group ring $\boldsymbol{\ell}\sharp G$ is
a simple $\boldsymbol{k}$-algebra and every finite
dimensional $\boldsymbol{\ell}\sharp G$-module $V$ is
completely reducible. In particular, if $V\neq 0$ then
$V^G\neq0$ and there is an $\boldsymbol{\ell}$ linear
isomorphism
\[
\boldsymbol{\ell}\otimes_{\boldsymbol{k}}V^G \lra V;
\quad
x\otimes v \mapsto xv.
\]
\end{lem}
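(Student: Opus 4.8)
The plan is to recognize that Lemma~\ref{lem:TwGpRg-ss} is a standard statement about modules over a simple algebra, specialized to the twisted group ring of a finite Galois extension. First I would establish simplicity of $\boldsymbol{\ell}\sharp G$. The natural map $\boldsymbol{\ell}\sharp G\lra\End_{\boldsymbol{k}}\boldsymbol{\ell}$ sending $a\gamma$ to $x\mapsto a\,{}^\gamma x$ was already noted to be a $\boldsymbol{k}$-algebra isomorphism (this is the content of~\cite{CHR}). Since $\boldsymbol{\ell}$ is a finite-dimensional $\boldsymbol{k}$-vector space, $\End_{\boldsymbol{k}}\boldsymbol{\ell}\iso M_d(\boldsymbol{k})$ with $d=\dim_{\boldsymbol{k}}\boldsymbol{\ell}=|G|$, which is a simple $\boldsymbol{k}$-algebra. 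Hence $\boldsymbol{\ell}\sharp G$ is simple.

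From simplicity, complete reducibility is immediate by classical Wedderburn--Artin theory: a finite-dimensional module over a simple (hence semisimple) algebra is a direct sum of copies of the unique simple module, so every finite-dimensional $\boldsymbol{\ell}\sharp G$-module $V$ is completely reducible. In particular every nonzero $V$ contains the simple module as a summand.

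The remaining assertions concern the invariants $V^G$ and the isomorphism $\boldsymbol{\ell}\otimes_{\boldsymbol{k}}V^G\xrightarrow{\iso}V$. Here I would invoke the module-theoretic form of Galois descent: under $\boldsymbol{\ell}\sharp G\iso\End_{\boldsymbol{k}}\boldsymbol{\ell}\iso\End_{\boldsymbol{\ell}^G}\boldsymbol{\ell}$, the category of $\boldsymbol{\ell}\sharp G$-modules is Morita equivalent to the category of $\boldsymbol{k}$-vector spaces, the equivalence being exactly $W\mapsto\boldsymbol{\ell}\otimes_{\boldsymbol{k}}W$ with inverse $V\mapsto V^G=\Hom_{\boldsymbol{\ell}\sharp G}(\boldsymbol{\ell},V)$. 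This directly yields both $V^G\neq0$ for $V\neq0$ and the claimed natural isomorphism $x\otimes v\mapsto xv$. Equivalently, one can read this off Lemma~\ref{lem:Unicursal-modules} via the Hopf-algebroid dictionary of Example~\ref{examp:Unicursal-Galextn}, where the $G$-Galois extension $\boldsymbol{k}\lra\boldsymbol{\ell}$ gives the unicursal Hopf algebroid $\boldsymbol{\ell}\otimes_{\boldsymbol{k}}\boldsymbol{\ell}$ and the primitive subcomodule computes $V^G$.

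The main obstacle, such as it is, is bookkeeping rather than depth: I must be careful to match the $\boldsymbol{\ell}$-module structure on $\boldsymbol{\ell}\otimes_{\boldsymbol{k}}V^G$ (coming from the left factor) with that on $V$, and to verify that the map $x\otimes v\mapsto xv$ is well defined and $\boldsymbol{\ell}$-linear rather than merely $\boldsymbol{k}$-linear. Since everything is finite-dimensional and the algebra is the full endomorphism ring, surjectivity and injectivity follow by a dimension count once $V^G$ is identified with $\Hom_{\boldsymbol{\ell}\sharp G}(\boldsymbol{\ell},V)$, so I expect no genuine difficulty beyond assembling these standard facts in the right order.
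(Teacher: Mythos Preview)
Your proposal is correct and follows essentially the same route as the paper: both use the isomorphism $\boldsymbol{\ell}\sharp G\iso\End_{\boldsymbol{k}}\boldsymbol{\ell}$ to obtain simplicity, deduce that every finite-dimensional module is a sum of copies of the unique simple module $\boldsymbol{\ell}$, and then read off $V^G\neq0$ from $\boldsymbol{\ell}^G=\boldsymbol{k}$. Your Morita/descent packaging of the last isomorphism is a slightly more conceptual rendering of what the paper dismisses as ``straightforward'', but the underlying argument is the same.
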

\begin{proof}
Since $\End_{\boldsymbol{k}}A$ is an irreducible
$\boldsymbol{k}$-algebra, it has a unique simple module
which agrees with $A$ as a $\boldsymbol{k}$-module. Hence
every finite dimensional module is isomorphic to a direct
sum of copies of $A$. Since $A^G=\boldsymbol{k}$, we see
that $V^G\neq0$. Verifying the bijectivity of the linear
map is straightforward.
\end{proof}

We can generalise this situation and still get similar
results. For example, if $\tilde G$ is a finite group
with a given epimorphism $\pi\:\tilde G\lra G$, then
$\boldsymbol{\ell}\sharp\tilde G$ is semi-simple provided
that $p\nmid |\ker\pi|$, see~\cite{Kuenzer}. In fact the
unicursal Hopf algebroid $\F\otimes_{\F_p}\F(\theta_0)$
of~\eqref{eqn:K_*E-unicursalDualGroup} is dual to
\[
\F\sharp(\Gal(\F/\F_p)\ltimes\F_{p^n}^\times),
\]
where the action of the group on $\F$ is through the
projection onto $\Gal(\F/\F_p)$.


\section{Non-exactness of tensoring with a pro-free module}\label{sec:App2}

In \cite[section~1]{Ho:colim}, it was shown that in $\hat{\mathscr{M}}$,
coproducts need not preserve left exactness. At the suggestion of the
referee, we include a more precise example showing that tensoring with
a pro-free module in $\hat{\mathscr{M}}$ need not be a left exact functor.
This material is due to the referee to whom we are grateful for the
opportunity to include it. The main result is Theorem~\ref{thm:App}, but
we need several preparatory technical results.
\begin{lem}\label{lem:App-1}
Let
\[
M_0\xleftarrow{\;f\;}M_1\xleftarrow{\;f\;}M_2\xleftarrow{\;f\;}\cdots
\]
be a inverse system of abelian groups for which
$\ds\lim_n M_n = 0 = {\lim_n}^1 M_n$ and where each homomorphism
$M_n\lra M_0$ is non-zero. Then the induced inverse system
\[
\bigoplus_{k\in\N} M_0\xleftarrow{\;\tilde f\;}\bigoplus_{k\in\N} M_1
\xleftarrow{\;\tilde f\;}\bigoplus_{k\in\N} M_2\xleftarrow{\;\tilde f\;}\cdots
\]
satisfies $\ds{\lim_n}^1\bigoplus_{k\in\N} M_n\neq0$.
\end{lem}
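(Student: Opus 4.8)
The plan is to exploit the tension between two facts: the individual inverse system $\{M_n\}$ is $\lim$-$\lim^1$-acyclic, yet each structure map to $M_0$ is nonzero, so the coproduct system accumulates an obstruction. First I would recall the standard characterisation of $\lim^1$ for a countable inverse system via the Milnor exact sequence: $\lim^1$ vanishes if and only if the system satisfies the Mittag--Leffler condition (up to the usual surjectivity-of-stable-images formulation), and more usefully, $\lim^1$ can be computed as the cokernel of the map
\[
\prod_n M_n \xrightarrow{\;1-f\;} \prod_n M_n,
\]
where $1-f$ sends $(x_n)$ to $(x_n - f(x_{n+1}))$. So I would write $\lim^1 \bigoplus_k M_n = \coker\bigl(1-\tilde f\bigr)$ on the product $\prod_n \bigoplus_k M_n$ and aim to produce an element of $\prod_n \bigoplus_k M_n$ that is not in the image of $1 - \tilde f$.

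The key construction is diagonal: I would choose for each $n$ a nonzero image of $M_n$ in $M_0$, witnessed by an element $a_n \in M_n$ whose image $f^{(n)}(a_n) \in M_0$ is nonzero, and then build a sequence in $\prod_n \bigoplus_k M_n$ whose $n$-th coordinate is supported in the $k=n$ summand, using $a_n$. The point is that solving $(1-\tilde f)(x) = b$ coordinatewise forces, in each fixed summand index $k$, an infinite telescoping sum that would have to converge inside $\bigoplus_k M_\bullet$; because the supports of $b$ march off to infinity in the $k$-direction while the map $f$ drags everything back toward $M_0$ with nonzero image, any putative solution $x$ would need infinitely many nonzero entries in a single coproduct coordinate, contradicting the finite-support condition defining $\bigoplus$. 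Thus $b \notin \im(1-\tilde f)$ and $\lim_n^1 \bigoplus_k M_n \neq 0$.

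The main obstacle, and the step I would be most careful about, is the bookkeeping that converts ``$x$ must have infinite support in some coproduct slot'' into a clean contradiction. I would make this rigorous by fixing a single coproduct index and projecting: after applying the projection $\bigoplus_{k} M_\bullet \to M_\bullet$ onto one chosen factor, the equation $(1-\tilde f)(x)=b$ reduces to an equation in $\prod_n M_n$, and the telescoping solution there is $x_n = \sum_{j\geq n} f^{(j-n)}(b_j)$, which is forced because $\lim_n M_n = 0$ kills the homogeneous ambiguity. I would then check that for my chosen $b$ this forced $x_n$, read back in the coproduct, has unboundedly growing support as $n$ decreases, so no genuine element of $\prod_n \bigoplus_k M_n$ realises it. The hypotheses $\lim_n M_n = 0$ (uniqueness of the telescoped solution) and $\lim_n^1 M_n = 0$ (so the pathology genuinely comes from the coproduct and not from the original system) are exactly what make this argument pin the blame on $\bigoplus$, which is the whole point of the lemma.
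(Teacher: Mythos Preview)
Your approach is essentially the same as the paper's: the same diagonal element $b$ with $b_{n,k}=a_n\delta_{n,k}$, the same use of $\lim_n M_n=0$ to force uniqueness of the preimage in each coproduct slot, and the same punchline that the forced solution has $x_{0,k}=f^k(a_k)\neq0$ for every~$k$, violating finite support at $n=0$. The paper organises this slightly more cleanly by embedding $\prod_n\bigoplus_k M_n$ into $\prod_n\prod_k M_n$, where the shift map becomes an honest isomorphism (using both $\lim=0$ and $\lim^1=0$), so the unique preimage $c$ can be named globally and then shown to fall outside the subgroup $\prod_n\bigoplus_k M_n$; your slot-by-slot projection does the same work. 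One small wording issue: your ``infinite telescoping sum'' $x_n=\sum_{j\geq n}f^{(j-n)}(b_j)$ is, after projecting to a fixed $k$, actually a single term $f^{k-n}(a_k)$ (for $n\leq k$, and zero otherwise), so no convergence issue arises --- the infinitude that matters is in the $k$-direction at $n=0$, not in the sum itself.
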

\begin{proof}
For ease of notation we write $\bigoplus_{k}$ for $\bigoplus_{k\in\N}$
and $\prod_{n}$ for $\prod_{n\in\N_0}$, where $\N=\{0\}\cup\N$.

Consider the commutative square
\[
\xymatrix{
\prod_n\bigoplus_{k} M_n\ar[r]^d\ar@{ >->}[d]
                & \prod_n\bigoplus_{k} M_n\ar@{ >->}[d] \\
\prod_n\prod_{k} M_n\ar[r]^{d'}_\iso & \prod_n\prod_{k} M_n
}
\]
in which $d$ is the shift map with
\[
\ker d = {\lim_n}^1(\bigoplus_{k} M_n),
\quad
\coker d = \lim_n(\bigoplus_{k} M_n),
\]
and similarly for $d'$. The vanishing of $\ds{\lim_n}^s M_n$
for $s=0,1$ implies that $d'$ is an isomorphism.

Now choose a sequence of elements $a_n\in M_n$ with non-zero
images in $M_0$. Define
\[
b_{nk}=
\begin{cases}
a_n & \text{if $k=n$}, \\
\;0   & \text{if $k\neq n$},
\end{cases}
\]
and let $b=(b_{n,k})$ be the resulting element of $\prod_n\prod_kM_n$.
Defining $c=(d')^{-1}(b)$, we see that
\begin{equation}\label{eqn:b-nk}
b_{nk} = c_{nk} - f(c_{n+1\,k})
\end{equation}
for all $n,k$. Now fix $k$ and consider the $c_{nk}$ for
$n>k$; these satisfy
\[
c_{nk} = f(c_{n+1\,k}),
\]
hence they yield an element of inverse limit of the inverse
system
\[
M_k\xleftarrow{\;f\;}M_{k+1}\xleftarrow{\;f\;}M_{k+2}\xleftarrow{\;f\;}\cdots,
\]
but
\[
\ds\lim_{n\geq k}M_n = \lim_{n}M_n = 0.
\]
Therefore $c_{nk}=0$ for $n>k$. Using~\eqref{eqn:b-nk},
we see that for all $k\geq n$,
\[
c_{nk} = f^{k-n}(a_k)
\]
and in particular, in $M_0$,
\[
c_{0k} = f^k(a_k) \neq 0.
\]
This shows that $c\notin\prod_n\bigoplus_{k} M_n$ even
though $d'(c)=b\in\prod_n\bigoplus_{k} M_n$. The result
follows by inspection of the diagram.
\end{proof}

Now let $R=\Z_p[[u]]$ with $\mathfrak{m}=(p,u)$ its maximal
ideal containing $p$ and $u$. Let $M$ be the
$\mathfrak{m}$-adic completion of $\bigoplus_nR$ which
can be identified with the set of sequences $x=(x_n)\in\prod_nR$
for which $x_n\ra 0$ in the $\mathfrak{m}$-adic topology.
The group $N=\prod_n(p^n,u^n)$ is a subgroup of $M$.

The category of $L$-complete modules is closed under products
and contains all finitely generated $R$-modules, therefore~$N$
and $M/N$ are $L$-complete.
\begin{lem}\label{lem:App-2}
For each $n\geq1$, the natural map
\[
\Tor^R_2(R/\mathfrak{m}^n,M/N)\lra\Tor^R_2(R/\mathfrak{m},M/N)
\]
is non-zero.
\end{lem}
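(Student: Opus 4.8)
The plan is to reduce both $\Tor^R_2$ groups to annihilators in $R/\mathfrak{m}^n$ computed by Koszul complexes, and then to exhibit an explicit class surviving the reduction. Throughout, write $N=\prod_j(p^j,u^j)$ and fix the power $n\geq1$ appearing in the statement. The crucial input is that $M$ is pro-free: it is the completion $L_0\bigl(\bigoplus_j R\bigr)$ of a free module, so by the vanishing established in Section~\ref{sec:L-modules} for pro-free modules tensored against finitely generated ones, $\Tor^R_s(M,R/\mathfrak{m}^n)=0$ for all $s>0$; since ordinary $\Tor$ is balanced, $\Tor^R_s(R/\mathfrak{m}^n,M)=0$ for $s>0$ as well.

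First I would dimension-shift along the short exact sequence $0\ra N\lra M\lra M/N\ra0$. The vanishing of $\Tor^R_1(R/\mathfrak{m}^n,M)$ and $\Tor^R_2(R/\mathfrak{m}^n,M)$ forces the connecting homomorphism to be an isomorphism $\Tor^R_2(R/\mathfrak{m}^n,M/N)\iso\Tor^R_1(R/\mathfrak{m}^n,N)$. Next, as $R$ is Noetherian and $R/\mathfrak{m}^n$ is finitely presented, its $\Tor$ commutes with the infinite product, giving $\Tor^R_1(R/\mathfrak{m}^n,N)\iso\prod_j\Tor^R_1(R/\mathfrak{m}^n,(p^j,u^j))$. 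A second dimension shift along $0\ra(p^j,u^j)\lra R\lra R/(p^j,u^j)\ra0$, using $\Tor^R_{>0}(R/\mathfrak{m}^n,R)=0$, identifies each factor with $\Tor^R_2(R/\mathfrak{m}^n,R/(p^j,u^j))$. Finally $(p^j,u^j)$ is a regular sequence in $R=\Z_p[[u]]$, so $R/(p^j,u^j)$ is resolved by the Koszul complex on $(p^j,u^j)$, whose top homology computes $\Tor^R_2(R/\mathfrak{m}^n,R/(p^j,u^j))\iso\{c\in R/\mathfrak{m}^n:p^jc=0=u^jc\}$, the common annihilator of $p^j$ and $u^j$.

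All of these isomorphisms are natural in the first variable, so running the same chain with $R/\mathfrak{m}$ in place of $R/\mathfrak{m}^n$ and tracking the quotient $R/\mathfrak{m}^n\lra R/\mathfrak{m}$, the map of the lemma becomes the product over $j$ of reduction maps
\[
\{c\in R/\mathfrak{m}^n:p^jc=0=u^jc\}\lra\{c\in R/\mathfrak{m}:p^jc=0=u^jc\}=R/\mathfrak{m},
\]
the target being all of $R/\mathfrak{m}=\F_p$ since $p^j$ and $u^j$ vanish there. To see non-triviality it suffices to exhibit one surviving class: for $j=n$ we have $p^n,u^n\in\mathfrak{m}^n$, so $1\in R/\mathfrak{m}^n$ lies in the $n$-th annihilator and reduces to $1\neq0$ in $R/\mathfrak{m}$. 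Hence the $j=n$ component is non-zero, so the product map, and therefore the original map, is non-zero.

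The main obstacle I expect is the naturality bookkeeping: one must verify that each link in the chain---the two connecting homomorphisms, the commutation of $\Tor$ with the product, and the Koszul identification---is compatible with the map induced by $R/\mathfrak{m}^n\lra R/\mathfrak{m}$. This is secured by choosing the two short exact sequences and the Koszul resolutions independently of $n$, and by computing the product decomposition from a resolution of $R/\mathfrak{m}^n$ by finitely generated free modules, so that everything is functorial in the first $\Tor$-variable. Beyond the pro-freeness of $M$ recorded in Section~\ref{sec:L-modules}, no deeper input is required.
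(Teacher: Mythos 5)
Your argument is correct. Every step checks out: $M=L_0(\bigoplus_j R)$ is pro-free, so $\Tor^R_{s}(R/\mathfrak{m}^n,M)=0$ for $s>0$ by the computation in Section~\ref{sec:L-modules}; the connecting map of $0\ra N\ra M\ra M/N\ra0$ is then an isomorphism $\Tor^R_2(R/\mathfrak{m}^n,M/N)\iso\Tor^R_1(R/\mathfrak{m}^n,N)$; Tor against the finitely presented module $R/\mathfrak{m}^n$ does commute with the product defining $N$ (resolve $R/\mathfrak{m}^n$ by finitely generated frees); and the Koszul identification of $\Tor^R_2(R/\mathfrak{m}^n,R/(p^j,u^j))$ with the common annihilator of $p^j,u^j$ in $R/\mathfrak{m}^n$, together with the observation that for $j=n$ this annihilator is everything and $1$ survives to $R/\mathfrak{m}$, is exactly the right endgame. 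The paper reaches the same Koszul computation by a shorter route: it observes that $R/(p^n,u^n)$ is a retract of $M/N$ (include as the $n$-th coordinate, project back out), so the lemma reduces immediately to the non-vanishing of $\Tor^R_2(R/\mathfrak{m}^n,R/(p^n,u^n))\lra\Tor^R_2(R/\mathfrak{m},R/(p^n,u^n))$, which is the surjection $R/\mathfrak{m}^n\lra R/\mathfrak{m}$. The retraction avoids all the naturality bookkeeping you flag (two dimension shifts and the product decomposition), since a retract splits off functorially; on the other hand your version computes the whole group $\Tor^R_2(R/\mathfrak{m}^n,M/N)\iso\prod_j\{c\in R/\mathfrak{m}^n:p^jc=0=u^jc\}$, which is more information than the lemma asks for. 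Either way the proof is sound.
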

\begin{proof}
Since $R/(p^n,u^n)$ is a retract of $N$, it suffices to prove
the result for $R/(p^n,u^n)$ in place of $N$. The sequence
$p^n,u^n$ is regular in $R$, so for any $R$-module~$K$ we can
compute $\Tor^R_*(R/(p^n,u^n),K)$ using a Koszul resolution.
In particular, if $p^nK=0=u^nK$ then we have
\[
\Tor^R_*(R/(p^n,u^n),K) = K
\]
and the reduction map is the obvious epimorphism
$R/(p^n,u^n)\lra R/(p,u)$. The result follows easily from this.
\end{proof}

\begin{cor}\label{cor:App-2}
The module $L_1(\bigoplus_k M/N)$ is non-zero.
\end{cor}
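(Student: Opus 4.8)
The plan is to deduce the non-vanishing of $L_1(\bigoplus_k M/N)$ from the exact sequence~\eqref{eqn:Ls-exactseq} together with Lemmas~\ref{lem:App-1} and~\ref{lem:App-2}. Applying~\eqref{eqn:Ls-exactseq} with $s=1$ to the $R$-module $\bigoplus_k M/N$ yields a monomorphism
\[
{\lim_n}^1\Tor^R_2(R/\mathfrak{m}^n,\textstyle\bigoplus_k M/N)\lra L_1(\textstyle\bigoplus_k M/N),
\]
so it suffices to prove that the left hand $\lim^1$ is non-zero. Since $\Tor$ commutes with direct sums in each variable, $\Tor^R_2(R/\mathfrak{m}^n,\bigoplus_k M/N)\iso\bigoplus_k\Tor^R_2(R/\mathfrak{m}^n,M/N)$, so I would apply Lemma~\ref{lem:App-1} to the inverse system $M_n=\Tor^R_2(R/\mathfrak{m}^{n+1},M/N)$ (indexed so that $M_0=\Tor^R_2(R/\mathfrak{m},M/N)$). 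Its hypotheses are exactly that each composite $M_n\lra M_0$ is non-zero, which is precisely Lemma~\ref{lem:App-2}, and that $\lim_n M_n=0={\lim_n}^1 M_n$. Granting these, Lemma~\ref{lem:App-1} gives ${\lim_n}^1\bigoplus_k M_n\neq0$, and since a finite shift does not affect $\lim^1$ this is the term we want.

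The crux is therefore to establish that the inverse system $\{\Tor^R_2(R/\mathfrak{m}^n,M/N)\}_n$ has both $\lim$ and $\lim^1$ trivial. My approach is a dimension shift along the short exact sequence $0\to N\to M\to M/N\to 0$. The module $M$ is pro-free, being the $\mathfrak{m}$-adic completion of the free module $\bigoplus_n R$; hence, as already observed in Section~\ref{sec:L-modules}, $\Tor^R_s(R/\mathfrak{m}^n,M)\iso\Tor^R_s(M,R/\mathfrak{m}^n)=0$ for all $s>0$. The long exact $\Tor^R(R/\mathfrak{m}^n,-)$-sequence of the displayed short exact sequence then produces a natural isomorphism of inverse systems
\[
\Tor^R_2(R/\mathfrak{m}^n,M/N)\iso\Tor^R_1(R/\mathfrak{m}^n,N).
\]
Because $R/\mathfrak{m}^n$ is finitely presented it admits a resolution by finitely generated free modules, so $\Tor^R_1(R/\mathfrak{m}^n,-)$ commutes with the infinite product $N=\prod_j(p^j,u^j)$, giving $\Tor^R_1(R/\mathfrak{m}^n,N)\iso\prod_j\Tor^R_1(R/\mathfrak{m}^n,(p^j,u^j))$.

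Finally, each ideal $(p^j,u^j)$ is a finitely generated module over the Noetherian ring $R$, so $L_1(p^j,u^j)=0$, and~\eqref{eqn:Ls-exactseq} forces $\lim_n\Tor^R_1(R/\mathfrak{m}^n,(p^j,u^j))=0$, while the Artin--Rees lemma makes the system $\{\Tor^R_1(R/\mathfrak{m}^n,(p^j,u^j))\}_n$ Mittag--Leffler so that its $\lim^1$ also vanishes. Since both $\lim$ and $\lim^1$ commute with products, it follows that $\lim_n\Tor^R_1(R/\mathfrak{m}^n,N)=0={\lim_n}^1\Tor^R_1(R/\mathfrak{m}^n,N)$, and hence the same holds for $\Tor^R_2(R/\mathfrak{m}^n,M/N)$, verifying the hypotheses of Lemma~\ref{lem:App-1}. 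I expect the main obstacle to be exactly this verification: one must check that the dimension-shift isomorphism is natural in $n$ (so that it is genuinely an isomorphism of inverse systems), and that $\Tor_1$, $\lim$ and $\lim^1$ all interact correctly with the infinite product defining $N$; it is this passage through the product, rather than any single homological computation, that does the real work. Note that $L_2(M/N)$ itself need not vanish, so it is essential to compute $\lim_n\Tor^R_2$ directly by dimension shift rather than trying to bound it by $L_2(M/N)$.
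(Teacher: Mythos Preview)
Your overall strategy---apply~\eqref{eqn:Ls-exactseq} to $\bigoplus_k M/N$ to reduce to the non-vanishing of ${\lim_n}^1\Tor^R_2(R/\mathfrak{m}^n,\bigoplus_k M/N)$, then invoke Lemma~\ref{lem:App-1} with Lemma~\ref{lem:App-2} supplying the non-zero-maps hypothesis---is exactly the paper's. The difference lies only in how you verify the remaining hypothesis of Lemma~\ref{lem:App-1}, namely that
\[
\lim_n\Tor^R_2(R/\mathfrak{m}^n,M/N)=0={\lim_n}^1\Tor^R_2(R/\mathfrak{m}^n,M/N).
\]

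Your dimension-shift through the pro-free module $M$, followed by the product decomposition of $N$ and an Artin--Rees argument on each finitely generated factor $(p^j,u^j)$, is correct but is an unnecessary detour. Your closing remark is in fact mistaken: $M/N$ \emph{is} $L$-complete, as the paper records just before Lemma~\ref{lem:App-2} (both $M$ and $N$ lie in the abelian subcategory $\hat{\mathscr{M}}$, hence so does their quotient). Consequently $L_s(M/N)=0$ for all $s>0$, and the paper simply feeds $M/N$ into~\eqref{eqn:Ls-exactseq} at $s=1$ and $s=2$ to read off both vanishing statements in one line. So the very shortcut you warn against is available and is what the paper uses; your longer route reaches the same destination but does more work than needed.
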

\begin{proof}
For each $s\geq1$, the natural short exact sequence
of~\eqref{eqn:Ls-exactseq} and the fact that $M/N$ is
$L$-complete and so $L_sM/N=0$, together yield
\[
\lim_n\Tor^R_2(R/\mathfrak{m}^n,K) =0=
                {\lim_n}^1\Tor^R_2(R/\mathfrak{m}^n,K).
\]
This gives one of the hypotheses of Lemma~\ref{lem:App-1},
and Lemma~\ref{lem:App-2} gives the other. Therefore
\[
{\lim_n}^1\Tor^R_2(R/\mathfrak{m}^n,M/N) \neq 0.
\]
Now applying~\eqref{eqn:Ls-exactseq} to $M/N$ gives
$L_1(\bigoplus_kM/N)\neq0$.
\end{proof}

\begin{lem}\label{lem:App-3}
If the sequence $p,u$ acts regularly on the $R$-module $K$,
then $L_sK=0$ for $s>0$.
\end{lem}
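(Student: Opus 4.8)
The statement asserts that if the sequence $p,u$ acts regularly on an $R$-module $K$, where $R=\Z_p[[u]]$ and $\mathfrak{m}=(p,u)$, then $L_sK=0$ for all $s>0$. The plan is to exploit the natural exact sequence~\eqref{eqn:Ls-exactseq}, which reduces the computation of $L_sK$ to understanding $\lim_k\Tor^R_s(R/\mathfrak{m}^k,K)$ and its first derived limit. The crucial input is regularity of the sequence $p,u$, which allows us to compute each $\Tor^R_s(R/\mathfrak{m}^k,K)$ explicitly via a Koszul-type resolution, exactly as in the proof of Lemma~\ref{lem:App-2}.

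First I would reduce to the case $s\in\{1,2\}$, since $\dim R=2$ forces $L_sK=0$ automatically for $s>2$ (as $L_s$ is trivial for $s>n=\dim R$, noted in Section~\ref{sec:L-modules}). For the remaining cases, the key step is to identify a cofinal system of ideals on which the $\Tor$ groups vanish or stabilise. The natural move is to replace the filtration by powers $\mathfrak{m}^k$ with the cofinal filtration by the ideals $(p^k,u^k)$, since these are generated by the regular sequence $p^k,u^k$ (a power of a regular sequence is still regular here, as $R$ is a domain). For each such ideal, the Koszul resolution of $R/(p^k,u^k)$ is a length-$2$ complex, and tensoring with $K$ and taking homology computes $\Tor^R_s(R/(p^k,u^k),K)$ directly in terms of the regular action of $p^k,u^k$ on $K$.

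The heart of the argument is then to show that the higher Koszul homology vanishes. Because $p,u$ is regular on $K$, so is $p^k,u^k$, and hence $\Tor^R_s(R/(p^k,u^k),K)=0$ for $s=1,2$: the Koszul complex for a regular sequence is acyclic except in degree zero. Consequently both $\lim_k\Tor^R_s(R/\mathfrak{m}^k,K)$ and ${\lim_k}^1\Tor^R_{s+1}(R/\mathfrak{m}^k,K)$ vanish for $s>0$, and feeding this into~\eqref{eqn:Ls-exactseq} gives $L_sK=0$.

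The main obstacle I anticipate is the transition from the filtration by $\mathfrak{m}^k$ to the filtration by $(p^k,u^k)$: one must check that these are mutually cofinal, so that the associated inverse limits (and derived limits) agree, and that the transition maps in the Koszul computation are compatible with the reduction maps $R/(p^k,u^k)\lra R/(p^j,u^j)$. This cofinality is elementary since $(p^k,u^k)\subseteq\mathfrak{m}^k$ and $\mathfrak{m}^{2k}\subseteq(p^k,u^k)$, but it is the step that makes the regular-sequence hypothesis usable, and it is where the argument of Lemma~\ref{lem:App-2} is implicitly being reused.
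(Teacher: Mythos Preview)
Your argument is correct, but the paper takes a slightly different route. Both proofs invoke the exact sequence~\eqref{eqn:Ls-exactseq} to reduce to a vanishing statement for $\Tor$ groups, and both ultimately rest on a Koszul computation. The difference is in how the reduction is organised. You pass to the cofinal system of ideals $(p^k,u^k)$ and observe that $p^k,u^k$ is again a regular sequence on $K$, so the Koszul complex gives $\Tor^R_s(R/(p^k,u^k),K)=0$ for $s>0$ directly for every~$k$. The paper instead stays with the powers $\mathfrak{m}^n$ themselves: since $R/\mathfrak{m}^n$ has a composition series with all subquotients isomorphic to $R/\mathfrak{m}$, the long exact sequence in $\Tor$ reduces the vanishing of $\Tor^R_s(R/\mathfrak{m}^n,K)$ to the single case $n=1$, which is then handled by the Koszul resolution for the regular sequence $p,u$. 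Your approach trades the composition-series step for the cofinality check and the (easy) fact that powers of a regular sequence remain regular; the paper's approach avoids cofinality at the cost of a short d\'evissage on $R/\mathfrak{m}^n$. Both are equally elementary and of comparable length.
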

\begin{proof}
Using the exact sequence~\eqref{eqn:Ls-exactseq}, it suffices
to show that for all $s>0$, $\Tor^R_s(R/\mathfrak{m}^n,K)=0$.
This can be deduced from the case $n=1$ since $R/\mathfrak{m}^n$
has a composition series with simple quotient terms isomorphic
$R/\mathfrak{m}$. This case $n=1$ can be directly verified using
the Koszul resolution.
\end{proof}

Here is the main result of this Appendix which complements
an example of~\cite{Ho:colim}.
\begin{thm}\label{thm:App}
The natural map $L_0(\bigoplus_kN)\lra L_0(\bigoplus_kM)$
is not injective.
\end{thm}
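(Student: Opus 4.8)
The plan is to realise the map $L_0(\bigoplus_k N)\lra L_0(\bigoplus_k M)$ as part of the long exact sequence of the derived functors $L_s=L_s^{\mathfrak{m}}$ attached to a short exact sequence, and to identify its kernel with a connecting homomorphism out of the nonzero group produced in Corollary~\ref{cor:App-2}. Since the coproduct functor is exact in $\mathscr{M}$, applying $\bigoplus_k$ to the short exact sequence $0\ra N\lra M\lra M/N\ra0$ gives a short exact sequence
\[
0\ra \bigoplus_k N\lra \bigoplus_k M\lra \bigoplus_k M/N\ra 0
\]
in $\mathscr{M}$. The associated long exact sequence of the $L_s$ contains the segment
\[
L_1\Bigl(\bigoplus_k M\Bigr)\lra L_1\Bigl(\bigoplus_k M/N\Bigr)
\xrightarrow{\ \partial\ } L_0\Bigl(\bigoplus_k N\Bigr)\lra L_0\Bigl(\bigoplus_k M\Bigr),
\]
so that the kernel of the map in question is exactly the image of the connecting map $\partial$.

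First I would show that $L_1(\bigoplus_k M)=0$, which forces $\partial$ to be injective. For this I would verify that the sequence $p,u$ acts regularly on $M$ and hence on the coproduct $\bigoplus_k M$, after which Lemma~\ref{lem:App-3} yields $L_s(\bigoplus_k M)=0$ for all $s>0$. That $p$ is a non-zero-divisor on $M\subseteq\prod_n R$ is immediate because $R$ is a domain. For the regularity of $u$ on $M/pM$, an equation $ux=py$ with $x,y\in M$ forces, componentwise, $x_n=pz_n$ via the Koszul syzygy for the regular sequence $p,u$ in $R$; the only point requiring care is that the resulting sequence $z=(z_n)$ again lies in $M$, \ie\ that $z_n\ra0$ in the $\mathfrak{m}$-adic topology.

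The main technical step is this last convergence claim, which I expect to be the crux. It amounts to the implication $pz\in\mathfrak{m}^k\Rightarrow z\in\mathfrak{m}^{k-1}$ in $R=\Z_p[[u]]$, and this follows from the fact that the associated graded ring $\mathrm{gr}_{\mathfrak{m}}R\iso\F_p[\bar p,\bar u]$ is a domain: multiplication by $\bar p$ raises the order of a nonzero element by exactly one. Granting $x_n=pz_n\ra0$, this gives $z_n\ra0$, so $z\in M$ and $x\in pM$; together with $M\neq\mathfrak{m}M$ (Proposition~\ref{prop:Nakayama}) this shows $p,u$ is regular on $M$. Finally, with $L_1(\bigoplus_k M)=0$ the connecting map $\partial$ embeds $L_1(\bigoplus_k M/N)$ into $L_0(\bigoplus_k N)$, and since $L_1(\bigoplus_k M/N)\neq0$ by Corollary~\ref{cor:App-2}, the kernel of $L_0(\bigoplus_k N)\lra L_0(\bigoplus_k M)$ is nonzero, as required.
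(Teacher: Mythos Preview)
Your proposal is correct and follows essentially the same route as the paper: apply the long exact sequence of the derived functors $L_s$ to the short exact sequence $0\ra\bigoplus_kN\lra\bigoplus_kM\lra\bigoplus_kM/N\ra0$, kill $L_1(\bigoplus_kM)$ using regularity of $p,u$ together with Lemma~\ref{lem:App-3}, and invoke Corollary~\ref{cor:App-2} for the non-vanishing of $L_1(\bigoplus_kM/N)$. The paper simply asserts that $p,u$ acts regularly on $\bigoplus_kM$, whereas you supply the verification (the convergence $z_n\ra0$ via the associated graded argument); this is a helpful elaboration but not a different method.
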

\begin{proof}
The short exact sequence
\[
0\ra \bigoplus_kN\lra \bigoplus_kM\lra \bigoplus_kM/N \ra0
\]
induces an exact sequence
\[
L_1(\bigoplus_kM)\lra L_1(\bigoplus_kM/N)
   \lra L_0(\bigoplus_kN)\lra L_0(\bigoplus_kM)\ra0.
\]
The sequence $p,u$ acts regularly on $\bigoplus_kM$, so
Lemma~\ref{lem:App-3} shows that $L_1(\bigoplus_kM)=0$,
while Corollary~\ref{cor:App-2} shows that
$L_1(\bigoplus_kM/N)\neq0$.
\end{proof}

\bigskip

\end{document}